\pgfplotsset{compat=1.11}
\newcommand{\N}{\mathbb{N}} 
\newcommand{\R}{\mathbb{R}} 
\newcommand{\cH}{\mathcal{H}} 
\newcommand{\cP}{\mathcal{P}} 
\newcommand{\cN}{\mathcal{N}} 
\newcommand{\cM}{\mathcal{M}} 
\newcommand{\scalprod}[2]{ \left< #1 , #2 \right> }
\newcommand{\norm}[1]{ \left\| #1 \right\| }
\newcommand{\Hscalprod}[2]{ \scalprod{#1}{#2}_{\cH} }
\newcommand{\Hnorm}[1]{ \norm{#1}_{\cH} }
\newcommand{\colemph}[1]{{\textcolor{DarkGreen}{#1}}} 
\newcommand{\set}[1]{ \left\{ #1 \right\} }
\newcommand{\bK}{K} 
\newcommand{\balpha}{\boldsymbol{\alpha}} 
\newcommand{\kron}{\otimes}
\DeclareMathOperator{\id}{id}
\DeclareMathOperator{\myspan}{span}
\DeclareMathOperator{\rank}{rank}
\DeclareMathOperator{\diag}{diag}
\DeclareMathOperator{\myvec}{vec}
\DeclareMathOperator{\sign}{sign}
\DeclareMathOperator{\mynull}{null}
\newtheorem{theorem}{Theorem}[section]
\newtheorem*{theorem*}{Theorem}
\newtheorem{lemma}[theorem]{Lemma}
\newtheorem{corollary}[theorem]{Corollary}
\theoremstyle{definition}
\newtheorem{definition}[theorem]{Definition}
\newtheorem{example}[theorem]{Example}
\theoremstyle{remark}
\newtheorem{remark}[theorem]{Remark}
\title{Interpolation with uncoupled separable matrix-valued kernels}
\author{
  D. Wittwar\thanks{Institute for Applied Analysis and Numerical Simulation, University of Stuttgart, Germany}
  \and
  G.~Santin\footnotemark[1]
  \and
  B.~Haasdonk\footnotemark[1]
}
\begin{document}
\maketitle


\begin{abstract}
In this paper we consider the problem of approximating vector-valued functions over a domain $\Omega$.
For this purpose, we use matrix-valued reproducing kernels, which can be related to Reproducing kernel
Hilbert spaces of vectorial functions and which can be viewed as an extension to the scalar-valued case.
These spaces seem promising, when modelling correlations between the target function components, as the components
are not learned independently of one another. We focus on the interpolation with such matrix-valued kernels.
We derive error bounds for the interpolation error in terms of a generalized power-function and
we introduce a subclass of matrix-valued kernels whose power-functions can be traced back to the power-function 
of scalar-valued reproducing kernels. Finally, we apply these kind of kernels to some artificial data to illustrate 
the benefit of interpolation with matrix-valued kernels in comparison to a componentwise approach.
\end{abstract}



\section{Introduction} \label{sec: Introduction}

Kernel methods are useful tools for dealing with a wide variety of different tasks ranging from 
machine learning e.g. via Support Vector Machines (SVMs) (\cite{Bishop2006,Schaback2006,Steinwart2011}),
function approximation from scattered data (\cite{Franke1979,Micchelli1986}) and many more. Especially the approximation 
aspect can be employed for generating surrogate models to speed up expensive function evaluation, see \cite{Wirtz2015a}.
In cases where the given output data or the desired target function is vector-valued, simple approaches which build individual
models for each function component can still be very costly, if the output is high dimensional and the component models rely on
independent data sets such that the union of those results in overly large sets. Additionally, approximating a vectorial
function componentwise with identical ansatz spaces might be the wrong choice, e.g. in case of different frequencies. 
We thus propose the use of matrix-valued kernels which lead to surrogates that can deal with correlations between function 
components, respective structural properties of the target function, and therefore provide a more suitable model. For divergence-free kernels,
matrix-valued kernel approximations have already been succesfully applied, see e.g.\ (\cite{Wendland2017b,Lowitzsch2005a,MR1254147,Fuselier2009}).

This paper is structured as follows: In Section \ref{section: RKHS} we begin with an introduction to matrix-valued kernels and extend well-known properties from the scalar-valued case
including error estimation. We then introduce a new subclass of matrix-valued kernels and study its properties in relation to the power-function
which enables us to perform a-priori interpolation error estimation in Section \ref{sec: Separable matrix-valued kernels}. 
A numerical example in Section \ref{sec: Numerical Example} illustrates the benefits of the matrix-valued ansatz when compared
to the scalar-valued case. Finally, we conclude with some remarks and an outlook.



\section{Reproducing kernel Hilbert spaces for matrix-valued kernels} \label{section: RKHS}

In this section we want to give a short overview over the theory of matrix-valued kernels and their application in interpolation. As matrix-valued kernels are an
extension of the well studied scalar-valued kernels, many of the following notions, properties and concepts are again suitable extensions of their scalar-valued counterparts.
For a more extensive overview with regards to this topic and other approximation schemes involving matrix-vaued kernels such as regression, we refer to literature, e.g.\ 
\cite{Alvarez2012,Micchelli2005,Reisert2007}.
\begin{definition}[Matrix-valued kernel] \label{def: Matrix-valued kernel}
 Let $\Omega$ be a non empty set. We call a function $k : \Omega \times \Omega \rightarrow \R^{m\times m}$ a \colemph{matrix-valued kernel} if
 \begin{equation*}
  k(x,y) = k(y,x)^T \quad \forall\, x,y \in \Omega.
 \end{equation*}
\end{definition}

\begin{definition}[Reproducing kernel Hilbert space (RKHS)] \label{def: RKHS}
Let $\cH$ denote a Hilbert space of $\R^m$-valued functions over a domain $\Omega$  with inner product
$\Hscalprod{\cdot}{\cdot}$ and induced norm $\Hnorm{ \cdot }$. We call $\cH$ an \colemph{ $\R^m$-reproducing kernel Hilbert space ($\R^m$-RKHS)}, if for all 
$x \in \Omega$ and $\alpha \in \R^m$ the directional point evaluation functional $\delta_x^{\alpha} : \cH \rightarrow \R$
defined by
\begin{equation}
\delta_x^{\alpha}(f) := f(x)^T\alpha. \label{def:eq: Riesz Representation}
\end{equation}
is bounded, i.e.\
\begin{equation*}
 \|\delta_x^{\alpha}\|_{\cH^{\prime}} := \sup\limits_{f \in \cH \setminus \set{0} } \frac{\delta_x^{\alpha}(f)}{ \Hnorm{f} } < \infty.
\end{equation*}

\end{definition}

Similar to the scalar-valued case, see for example \cite{Aronszajn1950}, there exists a one-to-one correspondence between RKHS of vector-valued functions and positive definite matrix-valued kernels. A necessary
concept for this is the notion of positive definiteness which is a straightforward extension from the scalar-valued case and is given as follows:

\begin{definition}[Definiteness]\label{def:Definiteness}
 Let $\Omega$ be non empty and $k : \Omega \times \Omega \rightarrow \R^{m\times m}$ be a matrix-valued kernel. 
 For a finite set $X := \set{x_1,\dots,x_n} \subset \Omega$, $n \in \N$, we define the \colemph{Gramian matrix}
 $\bK \in \R^{mn \times mn}$ as the block matrix given by
 \begin{equation}
  \bK := k(X,X) := (k(x_i,x_j))_{i,j=1}^{n} = \begin{bmatrix}
          k(x_1,x_1) &\cdots& k(x_1,x_n) \\
          \vdots & \ddots & \vdots \\
          k(x_n,x_1) & \cdots & k(x_n,x_n)
         \end{bmatrix}.\label{def:eq: Gramian matrix}
 \end{equation} 
The kernel $k$ is denoted as \colemph{positive definite}, if for all $n \in \N$ and $X = \set{x_1,\dots,x_n} \subset \Omega$ the Gramian matrix $\bK$ is positive
semi-definite, i.e.\ it holds
\begin{equation}
 \alpha^T\bK\alpha \geq 0 \quad \forall \, \alpha \in \R^{mn}. \label{def:eq:PD}
\end{equation}
The kernel is called \colemph{strictly positive definite (s.p.d.)} if for all $n \in \N$ and pairwise distinct $X = \set{x_1,\dots,x_n} \subset \Omega$ the 
Gramian matrix $\bK$ is positive definite, i.e.\ it holds
\begin{equation}
 \alpha^T \bK \alpha > 0 \quad \forall \, \alpha \in \R^{nm} \setminus\{0\}. \label{def:eq:SPD}
\end{equation}
Furthermore, we will introduce the abbreviation
\begin{equation}
 k(X,x)^T := k(x,X) := (k(x,x_i))_{i=1}^n 
 = \begin{bmatrix}
    k(x,x_1) & \cdots & k(x,x_n) \label{def:eq: k(x,X)}
   \end{bmatrix}
 \in \R^{m \times mn}
\end{equation}
as it will be useful later on.
\end{definition}
Going forward, for $A,B $ symmetric matrices, we will use the notation $A \succeq B$ if $A-B$ is positive semi-definite and $A \succ B$ if $A-B$ is positive definite.

As mentioned before, every RKHS corresponds to a positive definite matrix-valued kernel and vice versa. We state this in the following theorem. 
A proof for operator-valued kernels, which include the finite dimensional matrix-case, can be found for example in \cite{KDPCRA2016}.

\begin{theorem}[One-to-one correspondence]
Let $\cH$ be an $\R^m$-RKHS. Then there exists a unique positive definite matrix-valued kernel
$k : \Omega \times \Omega \rightarrow \R^{m \times m}$ such that for all $x \in \Omega$, $\alpha \in \R^m$ and $f \in \cH$
\begin{equation}
k(\cdot,x)\alpha \in \cH \quad \text{and} \quad \Hscalprod{f}{k(\cdot,x)\alpha} = f(x)^T\alpha. \label{thm:eq: Reproducing Kernel Condition}
\end{equation}
Conversely, if $k : \Omega \times \Omega \rightarrow \R^{m \times m}$ is a positive definite matrix-valued kernel, then there exists a unique
Hilbert space $\cH$ of $\R^m$-valued functions on $\Omega$ such that \eqref{thm:eq: Reproducing Kernel Condition} holds.
\end{theorem}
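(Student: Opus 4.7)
The plan is to prove the two implications separately, each following the template of the scalar-valued Moore--Aronszajn theorem with the vector structure carried along explicitly.

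For the forward direction, I would invoke the Riesz representation theorem directly. For each fixed pair $(x, \alpha) \in \Omega \times \R^m$, the functional $\delta_x^\alpha$ is linear and bounded on $\cH$ by hypothesis, so there is a unique $K_{x,\alpha} \in \cH$ with $f(x)^T\alpha = \Hscalprod{f}{K_{x,\alpha}}$ for every $f \in \cH$. Linearity of $\delta_x^\alpha$ in $\alpha$ transfers to $K_{x,\alpha}$, which would let me define a matrix $k(y,x) \in \R^{m \times m}$ through $\beta^T k(y,x) \alpha := \bigl(K_{x,\alpha}(y)\bigr)^T \beta$, and then $k(\cdot,x)\alpha := K_{x,\alpha}$ yields \eqref{thm:eq: Reproducing Kernel Condition} by construction. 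The symmetry $k(x,y) = k(y,x)^T$ drops out by evaluating $\Hscalprod{K_{x,\alpha}}{K_{y,\beta}}$ with the reproducing property in either slot, while positive definiteness follows from the identity $\sum_{i,j}\alpha_i^T k(x_i, x_j)\alpha_j = \Hnorm{\sum_i K_{x_i,\alpha_i}}^2 \ge 0$. Uniqueness of $k$ is immediate, since the reproducing property pins down each $k(\cdot,y)\alpha \in \cH$.

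For the reverse direction, I would use the standard pre-Hilbert construction. Set $\cH_0 := \myspan\set{k(\cdot,x)\alpha : x \in \Omega,\, \alpha \in \R^m}$ and equip it with the bilinear form defined on generators by $\Hscalprod{k(\cdot,x)\alpha}{k(\cdot,y)\beta} := \alpha^T k(x,y)\beta$ and extended bilinearly. Well-definedness is the first item to check carefully: for $f = \sum_i k(\cdot,x_i)\alpha_i$, the putative inner product against any $k(\cdot,y)\beta$ equals $f(y)^T\beta$, which depends only on $f$ as a function, so the form does not depend on the representation on either argument. Positive semi-definiteness of the Gramian gives $\Hscalprod{f}{f}\ge 0$, and Cauchy--Schwarz combined with the reproducing identity yields $|f(x)^T\alpha|^2 \le \Hscalprod{f}{f}\cdot \alpha^T k(x,x)\alpha$, so $\Hscalprod{f}{f} = 0$ forces $f \equiv 0$ on $\Omega$, turning the form into a genuine inner product on $\cH_0$.

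The subtle step, exactly as in the scalar-valued Moore--Aronszajn argument, is realising the abstract Hilbert space completion of $\cH_0$ as a genuine space of $\R^m$-valued functions on $\Omega$. Applying the above Cauchy--Schwarz estimate to any Cauchy sequence $(f_n) \subset \cH_0$ gives $|f_n(x)^T\alpha - f_m(x)^T\alpha| \le \Hnorm{f_n - f_m}\cdot \sqrt{\alpha^T k(x,x)\alpha}$, so the pointwise limit $f(x) := \lim_n f_n(x) \in \R^m$ exists for every $x$. I would then verify that this limit depends only on the equivalence class of the sequence, that distinct equivalence classes yield distinct functions (otherwise orthogonality to all generators forces a representative to vanish at every $x$), and that the reproducing property extends by continuity from $\cH_0$ to the completion $\cH$. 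Uniqueness of $\cH$ follows because any other Hilbert space of $\R^m$-valued functions admitting $k$ as a reproducing kernel must contain $\cH_0$ as a dense subspace (any $g$ orthogonal to $\cH_0$ satisfies $g(x)^T\alpha = 0$ for all $x$ and $\alpha$) and carry the same inner product on it, so the two completions coincide isometrically as spaces of functions.
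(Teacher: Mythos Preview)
Your argument is correct and is precisely the vector-valued Moore--Aronszajn construction: Riesz representers for the directional point evaluations to build $k$ in the forward direction, and the pre-Hilbert span with completion realised as a function space in the reverse direction. All the delicate points (well-definedness of the form, definiteness via Cauchy--Schwarz, identification of the abstract completion with a space of functions, density of $\cH_0$ for uniqueness) are handled.

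There is, however, nothing to compare against: the paper does not supply a proof of this theorem at all. It states the result and refers the reader to the literature (specifically to a proof for operator-valued kernels, of which the matrix case is a special instance). Your write-up therefore goes well beyond what the paper itself provides, and is exactly the argument one would expect such a reference to contain.
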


In the scalar-valued case, there is an alternative characterization by means of feature maps, i.e.\ for a p.d. kernel there exists a mapping 
$\Phi : \Omega \rightarrow V$, where $V$ is some Hilbert space, such that the reproducing kernel is given by
\begin{equation*}
 k(x,y) = \langle \Phi(x),\Phi(y) \rangle_V, \quad \forall \, x,y \in \Omega.
\end{equation*}
In the matrix-valued case this is no longer possible, as inner products are scalar-valued. Nonetheless, the concept can be adapted by allowing mappings 
$\Phi : \Omega \times \R^m \rightarrow V$ such that
\begin{equation*}
 \alpha^Tk(x,y)\beta = \langle \Phi(x,\alpha), \Phi(y,\beta) \rangle_V ,\quad \forall \, x,y \in \Omega, \, \alpha,\beta \in \R^m.
\end{equation*}
For further details we refer to \cite{CMPY2008,MP2004}.

\begin{lemma}[Closed subspaces are RKHS] \label{lem: Closed subspaces are RKHS}
 Let $\cH$ be an $\R^m$-RKHS. If $\cN \subset \cH$ is a closed subspace then $\cN$ is also an $\R^m$-RKHS. Furthermore, if $\cN$ is finite dimensional with
 orthonormal basis $(v_n)_{n = 1}^{\dim \cN}$, then the reproducing kernel $k_{\cN}: \Omega \times \Omega \rightarrow \R^{m \times m}$ of $\cN$ is given by
 \begin{equation}
  k_{\cN}(x,y) = \sum\limits_{n=1}^{\dim\cN} v_n(x)v_n(y)^T. \label{lem:eq: reproducing kernel subspace}
 \end{equation}
\end{lemma}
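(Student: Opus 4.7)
The plan is to treat the two assertions in turn: first show that $\cN$ is itself an $\R^m$-RKHS, and then identify its reproducing kernel via the given orthonormal basis. For the first assertion, as a closed subspace of the Hilbert space $\cH$, the subspace $\cN$ is a Hilbert space under the inherited inner product and its induced norm. For any $x \in \Omega$ and $\alpha \in \R^m$, the directional point evaluation $\delta_x^\alpha$ is defined on $\cH$ and hence by restriction on $\cN$, and its operator norm on $\cN$ satisfies
\[
\sup_{f \in \cN \setminus \{0\}} \frac{\delta_x^\alpha(f)}{\Hnorm{f}} \;\leq\; \sup_{f \in \cH \setminus \{0\}} \frac{\delta_x^\alpha(f)}{\Hnorm{f}} \;=\; \norm{\delta_x^\alpha}_{\cH'} \;<\; \infty,
\]
since the supremum is taken over a smaller set. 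This is exactly Definition \ref{def: RKHS}, and by the one-to-one correspondence theorem $\cN$ possesses a unique reproducing kernel, which I shall still denote by $k_\cN$.

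For the explicit formula \eqref{lem:eq: reproducing kernel subspace}, I would verify the two conditions in \eqref{thm:eq: Reproducing Kernel Condition} for the candidate $k_\cN(x,y) = \sum_{n=1}^{\dim\cN} v_n(x) v_n(y)^T$. Since each $v_n(x)^T \alpha$ is a real scalar, the identity
\[
k_\cN(\cdot, x)\alpha \;=\; \sum_{n=1}^{\dim \cN} \bigl(v_n(x)^T \alpha\bigr)\, v_n(\cdot)
\]
exhibits $k_\cN(\cdot, x)\alpha$ as a finite linear combination of basis elements of $\cN$, hence as an element of $\cN$. For the reproducing identity, expanding $f \in \cN$ as $f = \sum_m \Hscalprod{f}{v_m} v_m$ gives $f(x)^T \alpha = \sum_m \Hscalprod{f}{v_m}\, v_m(x)^T \alpha$, while linearity of the inner product together with orthonormality of $(v_n)$ yields
\[
\Hscalprod{f}{k_\cN(\cdot, x)\alpha} \;=\; \sum_{n=1}^{\dim\cN} \bigl(v_n(x)^T\alpha\bigr)\, \Hscalprod{f}{v_n},
\]
which matches the previous expression. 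Uniqueness of the reproducing kernel then forces this to be $k_\cN$.

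The argument is essentially a verification, so no single step is a serious obstacle. The only piece of bookkeeping worth emphasizing concerns the placement of transposes: in $k_\cN(x,y) = \sum_n v_n(x)v_n(y)^T$ the outer product yields an $\R^{m\times m}$ matrix as required by Definition \ref{def: Matrix-valued kernel}, whereas $v_n(x)^T\alpha$ is a real scalar, which is precisely what makes the sum $\sum_n (v_n(x)^T \alpha)\, v_n(\cdot)$ an $\R^m$-valued linear combination of the basis rather than some matrix-weighted object outside of $\cN$.
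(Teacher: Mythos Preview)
Your proof is correct and follows essentially the same route as the paper: bounding $\norm{\delta_x^\alpha}_{\cN'}$ by $\norm{\delta_x^\alpha}_{\cH'}$ via the smaller supremum, and then verifying both parts of \eqref{thm:eq: Reproducing Kernel Condition} for the candidate $k_\cN$ using that $v_n(x)^T\alpha$ is scalar together with orthonormality of the $v_n$. The only cosmetic difference is that the paper checks the reproducing identity on basis vectors $v_i$ and extends by linearity, whereas you expand a general $f$ in the basis first; these are the same computation.
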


\begin{proof}
By Definition \ref{def: RKHS} it is sufficient to show that the directional point evaluation functionals $\delta_x^{\alpha} : \cN \rightarrow \R$ are bounded:
\begin{equation*}
 \| \delta_x^{\alpha} \|_{\cN^{\prime} } = \sup\limits_{f \in \cN \setminus \set{0} } \frac{\delta_x^{\alpha}(f)}{\|f\|_{\cN}} 
 \leq \sup\limits_{f \in \cH \setminus \set{0} } \frac{\delta_x^{\alpha}(f)}{ \Hnorm{f} } =  \| \delta_x^{\alpha} \|_{\cH^{\prime} } < \infty.
\end{equation*}
 We now show that $k_{\cN}$ as defined in \eqref{lem:eq: reproducing kernel subspace} satisfies the reproducing property \eqref{thm:eq: Reproducing Kernel Condition}:
 Let $\alpha \in \R^m$, it holds
 \begin{equation*}
  k_{\cN}(\cdot,x)\alpha = \sum\limits_{n=1}^{\dim\cN} v_n(\cdot) \underbrace{v_n(x)^T\alpha}_{ \in \R} \in \cN
 \end{equation*}
  and for $v_i$ with $i = 1,\dots,\dim\cN$
  \begin{equation*}
   \Hscalprod{ v_i}{ k_{\cN}(\cdot,x)\alpha} = \sum\limits_{n=1}^{\dim\cN} \underbrace{\Hscalprod{v_i}{v_n}}_{= \delta_{in}}v_n(x)^T\alpha = v_i(x)^T\alpha.
  \end{equation*}
 Due to the linearity of the inner product $\Hscalprod{f}{k_{\cN}(\cdot,x)\alpha} = f(x)^T\alpha$ holds for all $f \in \cN$.
\end{proof}

In general we are interested in finite dimensional subspaces of $\cH$ which are spanned by kernel evaluations $k(\cdot,x_i)\alpha$ for different centers $x_i \in X = \set{ x_1,\dots,x_N} \subset \Omega$
and directions $\alpha \in \R^m$, i.e.\ we
are considering subspaces $\cN(X) \subset \cH$ of the form
\begin{equation}
 \cN(X) := \myspan \set{ k(\cdot,x_i)\alpha | \, x_i \in X, \, \alpha \in \R^m }. \label{eq: Subspace of center translations}
\end{equation}
Caused by the reproducing property \eqref{thm:eq: Reproducing Kernel Condition} the orthogonal projection operator $\Pi_{\cN(X)} : \cH \rightarrow \cN(X)$, which is characterized by
\begin{equation}
 \Hscalprod{ f - \Pi_{\cN(X)}(f) }{ g} = 0, \quad \forall \, g \in \cN(X), \label{eq: orthogonal projection}
\end{equation}
coincides with the interpolation operator $I_{\cN(X)} : \cH \rightarrow \cN(X)$ which interpolates a given function $f \in \cH$ on the set $X$ by a function $g = I_{\cN(X)}(f) \in \cN(X)$, i.e.\
\begin{equation*}
 f(x_i) = I_{\cN(X)}(f)(x_i) = g(x_i), \quad \forall \, x_i \in X.
\end{equation*}
Indeed, using $g = k(\cdot,x_i)\alpha \in \cN(X)$ in \eqref{eq: orthogonal projection} results in
\begin{equation*}
 \left( f(x_i) - \Pi_{\cN(X)}(f)(x_i) \right)^T \alpha = 0, \quad \forall \, x_i \in X, \, \alpha \in \R^m,
\end{equation*}
In summary, this means that the interpolant
\begin{equation}
 \Pi_{\cN(X)}(f) = \sum\limits_{i=1}^N k(\cdot,x_i)\alpha_i \label{eq: Interpolant}
\end{equation}
is characterized by solutions of the linear system
\begin{equation}
 k(X,X)\balpha := \begin{bmatrix}
          k(x_1,x_1) &\cdots& k(x_1,x_N) \\
          \vdots & \ddots & \vdots \\
          k(x_n,x_1) & \cdots & k(x_N,x_N)
         \end{bmatrix}
         \begin{bmatrix}
          \alpha_1 \\
          \vdots \\
          \alpha_N
         \end{bmatrix}
 = \begin{bmatrix}
    f(x_1) \\
    \vdots \\
    f(x_N)
   \end{bmatrix} =: f(X). \label{eq: Interpolation System}
\end{equation}
If the kernel $k$ is strictly positive definite, system \eqref{eq: Interpolation System} admits a unique solution as the system matrix $k(X,X)$ is regular. Therefore, an interpolant is always
well defined even if the right hand side in \eqref{eq: Interpolation System} does not stem from the evaluation of a function $f \in \cH$ on the set of centers $X$. In cases where the kernel is only
positive definite, i.e.\ $k(X,X)$ is positive semi-definite, the system has in general no unique solution for arbitrary right hand sides. However, a solution still exists when $f \in \cH$:

\begin{lemma} \label{lem: Projection onto subspace}
 Let $k: \Omega \times \Omega \rightarrow \R^{m \times m}$ be a matrix-valued positive definite kernel, $X = \set{ x_1,\dots,x_n} \subset \Omega$, $f \in \cH$. Furthermore,
 let \begin{equation*}
      \Pi_{\cN(X)}(f) = \sum\limits_{i=1}^n k(\cdot,x_i) \alpha_i
     \end{equation*}
 be the orthogonal projection of $f$ onto $\cN(X)$, where
 $\balpha : = \begin{bmatrix}
\alpha_1^T & \cdots & \alpha_n^T
\end{bmatrix}^T \in \R^{mn}$.
Then it holds
\begin{equation}
 \balpha \in k(X,X)^{+}f(X) + \mynull( k(X,X)).
\end{equation}
Here $k(X,X)^{+}$ denotes the Moore-Penrose pseudo inverse of $k(X,X)$.
\end{lemma}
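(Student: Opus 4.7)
The plan is to convert the variational characterization of the projection into a concrete linear system, and then invoke the standard pseudoinverse description of its solution set.

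First, I would use the discussion immediately preceding the lemma, which established that the orthogonal projection operator $\Pi_{\cN(X)}$ coincides with the interpolation operator, so that $\Pi_{\cN(X)}(f)(x_j) = f(x_j)$ for every $x_j \in X$. Substituting the hypothesised representation $\Pi_{\cN(X)}(f) = \sum_{i=1}^n k(\cdot, x_i)\alpha_i$ into this identity and evaluating at each $x_j$ yields
\begin{equation*}
\sum_{i=1}^n k(x_j, x_i)\alpha_i = f(x_j), \quad j = 1,\dots,n,
\end{equation*}
which is precisely the block-matrix equation $k(X,X)\balpha = f(X)$. In particular, $f(X) \in \range(k(X,X))$, so the system is consistent.

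Once consistency is established, I would conclude by invoking the standard linear algebra fact that, for any consistent system $A\balpha = b$, the full solution set is given by $A^+ b + \mynull(A)$; applying this with $A = k(X,X)$ and $b = f(X)$ gives exactly the asserted characterization.

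The only point that requires a small amount of care, rather than a genuine obstacle, is that when $k(X,X)$ is merely positive semi-definite the coefficient vector $\balpha$ representing $\Pi_{\cN(X)}(f)$ need not be unique. The argument is nonetheless insensitive to this: any valid representation produces the same pointwise values $\sum_i k(x_j,x_i)\alpha_i = \Pi_{\cN(X)}(f)(x_j)$, and two such representations differ exactly by a vector in $\mynull(k(X,X))$, which is consistent with the claim. Existence of the projection itself, needed to get the argument off the ground, follows since $\cN(X)$ is finite-dimensional, hence closed in $\cH$.
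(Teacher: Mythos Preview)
Your proposal is correct and follows essentially the same approach as the paper: both first establish the linear system $k(X,X)\balpha = f(X)$ from the orthogonality/interpolation property of the projection, and then use the Moore--Penrose identity $AA^{+}A=A$ (which underlies the ``standard fact'' you cite) to place $\balpha$ in $k(X,X)^{+}f(X)+\mynull(k(X,X))$. The only cosmetic difference is that the paper re-derives the interpolation equations via $\Hscalprod{\Pi_{\cN(X)}(f)}{k(\cdot,x_l)e_j}$ inside the proof, whereas you invoke the identification $\Pi_{\cN(X)}=I_{\cN(X)}$ already established in the preceding discussion.
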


\begin{proof}
 Let ${e_1,\dots,e_m}$ denote the standard basis of $\R^m$. By \eqref{eq: orthogonal projection} the interpolant satisfies
 \begin{align*}
  f(x_l)^Te_j = \Hscalprod{ \Pi_{\cN(X)}(f) } {k(\cdot,x_l)e_j} & = \Hscalprod {k(\cdot,x_l)e_j}{ \Pi_{\cN(X)}(f) } \\
  & = \Hscalprod {k(\cdot,x_l)e_j}{ k(\cdot,X)\balpha } \\
  & = e_j^Tk(X,x_l)^T\balpha \\
  & = e_j^T k(x_l,X) \balpha.
  \end{align*}
 
 Since this holds for all $l = 1,\dots,n$ and $j = 1,\dots,m$ we conclude
 \begin{equation}
  k(X,X)\balpha = f(X), \label{lem:eq: Interpolation System}
 \end{equation}
 i.e.\ $\balpha$ solves \eqref{eq: Interpolation System}. Let $\balpha^{\ast} := k(X,X)^{+}f(X)$ and by use of \eqref{lem:eq: Interpolation System} we get
 \begin{equation*}
  k(X,X)\balpha^{\ast} = k(X,X)k(X,X)^{+}f(X) = k(X,X)k(X,X)^{+}k(X,X)\balpha = k(X,X)\balpha = f(X)
 \end{equation*}
 and therefore $\balpha^{\ast}$ also solves \eqref{eq: Interpolation System} which implies $\balpha - \balpha^{\ast} \in \mynull(k(X,X))$.
\end{proof}
Following the above property it seems reasonable to define an approximation to a given function $f$ in the subspace $\cN(X)$ by
\begin{equation*}
 g(x) := k(x,X)k(X,X)^{+}f(X) \approx f(x)
\end{equation*}
even if $f \notin \cH$. In this case the interpolation property at the centers $X$ can no longer be guaranteed 
as in the strictly positive definite case, as $f(X) \in \mathrm{range}(k(X,X))$ cannot be guaranteed.

Before we further investigate how the error between a function $f \in \cH$ and its interpolant $\Pi_{\cN(X)}(f)$ can be quantified, we will present a direct corollary in which we derive
an alternative representation of the reproducing kernel on $\cN(X)$:

\begin{corollary}[Reproducing kernel of $\cN(X)$] \label{cor: Reproducing kernel of NX}
It holds
\begin{equation*}
 k_{\cN(X)}(x,y) = k(x,X)k(X,X)^{+}k(X,y). 
\end{equation*}

\end{corollary}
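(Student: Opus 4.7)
The plan is to identify $k_{\cN(X)}(\cdot,y)\alpha$ with the orthogonal projection of $k(\cdot,y)\alpha \in \cH$ onto $\cN(X)$ and then invoke Lemma \ref{lem: Projection onto subspace} to obtain the explicit formula.

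First, I would fix $y\in\Omega$ and $\alpha\in\R^m$ and set $f := k(\cdot,y)\alpha$, which is an element of $\cH$ by the reproducing property \eqref{thm:eq: Reproducing Kernel Condition}. For any $g\in\cN(X)$, the definition of the orthogonal projection together with the reproducing property in $\cH$ gives
\begin{equation*}
 \Hscalprod{g}{\Pi_{\cN(X)}(f)} = \Hscalprod{g}{f} = \Hscalprod{g}{k(\cdot,y)\alpha} = g(y)^T\alpha,
\end{equation*}
where in the first equality we used that $g-\Pi_{\cN(X)}(g)=0$ and $\Pi_{\cN(X)}$ is self-adjoint. This shows that $\Pi_{\cN(X)}(f)\in\cN(X)$ realizes the directional point evaluation on $\cN(X)$, and by Lemma \ref{lem: Closed subspaces are RKHS} the subspace $\cN(X)$ is itself an $\R^m$-RKHS. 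Uniqueness of the reproducing kernel therefore forces $k_{\cN(X)}(\cdot,y)\alpha = \Pi_{\cN(X)}(k(\cdot,y)\alpha)$.

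Second, I would apply Lemma \ref{lem: Projection onto subspace} with this specific $f$: since $f(X) = k(X,y)\alpha$, any coefficient vector $\boldsymbol{\beta} \in k(X,X)^{+}k(X,y)\alpha + \mynull(k(X,X))$ produces the same function via \eqref{eq: Interpolant}, because elements of $\mynull(k(X,X))$ contribute zero to the projection (the squared norm of $\sum_i k(\cdot,x_i)\beta_i$ equals $\boldsymbol{\beta}^T k(X,X)\boldsymbol{\beta}$). Hence
\begin{equation*}
 \Pi_{\cN(X)}(f)(x) = k(x,X)\,k(X,X)^{+}k(X,y)\,\alpha.
\end{equation*}

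Combining the two steps, we obtain $k_{\cN(X)}(x,y)\alpha = k(x,X)k(X,X)^{+}k(X,y)\alpha$ for every $\alpha\in\R^m$, which yields the claimed matrix identity. The only mildly delicate point is justifying that the choice of pseudo-inverse representative does not affect the projection, but this follows immediately from the fact that vectors in $\mynull(k(X,X))$ yield functions of norm zero in $\cH$, so there is no real obstacle beyond bookkeeping.
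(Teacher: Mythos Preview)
Your proof is correct and follows essentially the same route as the paper: both arguments identify $k_{\cN(X)}(\cdot,y)\alpha$ with $\Pi_{\cN(X)}\bigl(k(\cdot,y)\alpha\bigr)$ via self-adjointness of the projection and the reproducing property, and then invoke Lemma \ref{lem: Projection onto subspace} to obtain the explicit pseudo-inverse formula. The only minor difference is that you spell out why the $\mynull(k(X,X))$ ambiguity in the coefficient vector is harmless, whereas the paper absorbs this into the statement of the lemma; conversely, the paper writes out the self-adjointness verification that you take for granted.
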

\begin{proof}
 By Lemma  \ref{lem: Projection onto subspace} we have
 \begin{equation*}
  \Pi_{\cN(X)}k(\cdot,x)\alpha = k(\cdot,X)k(X,X)^{+}k(X,x)\alpha.
 \end{equation*}
  It is therefore sufficient to show that for any $\alpha \in \R^m$
 \begin{equation*}
  \Pi_{\cN(X)}k(\cdot,x)\alpha = k_{\cN(X)}(\cdot,x)\alpha.
 \end{equation*}
 To this end, we first show that $\Pi_{\cN} : \cH \rightarrow \cN(X)$ is self-adjoint. For this purpose, let $f,g \in \cH$, then it holds
 \begin{align*}
  \langle \Pi_{\cN(X)}f , g \rangle & = \underbrace{\langle  \Pi_{\cN(X)}f , g - \Pi_{\cN(X)}g \rangle}_{= 0} + \langle \Pi_{\cN(X)}f  , \Pi_{\cN(X)}g \rangle \\
  & = - \underbrace{\langle f - \Pi_{\cN(X)}f, \Pi_{\cN(X)}g \rangle}_{= 0} + \langle f, \Pi_{\cN(X)} g \rangle \\
  & = \langle f , \Pi_{\cN(X)}g \rangle.
 \end{align*}
 By definition of the projection operator $\Pi_{\cN(X)}k(\cdot,x)\alpha \in \cN(X)$ and by the above it holds for any $f \in \cN(X)$:
 \begin{equation*}
  \Hscalprod{ f}{\Pi_{\cN(X)}k(\cdot,x)\alpha} = \Hscalprod{ \Pi_{\cN(X)}(f)}{k(\cdot,x)\alpha} = \Hscalprod{f}{k(\cdot,x)\alpha} = f(x)^T\alpha.
 \end{equation*}
 \end{proof}
The above corollary extends a well known result for scalar-valued kernels, see \cite{Mouattamid2009}, which states that the reproducing kernel on a closed subspace is equal
to the projection of the reproducing kernel on the entire space with regard to either argument.
However, in the matrix-valued case this does not carry over immediately, as the kernel has to be weighted with a direction, since the kernel itself is not an element of the RKHS.

As a tool to measure the error between $f$ and its interpolant we want to present the so called power-function, which for example was used in \cite{Sch1993a} for scalar-valued kernels:

\begin{definition}[Power-function]
Let $\cH$ be an $\R^m$-RKHS and $\cN \subset \cH$ be a closed subspace. Furthermore, 
let $\Pi_{\cN} : \cH \rightarrow \cN$ denote the orthogonal projection onto $\cN$. We define the \colemph{power-function} $\cP_{\cN} : \cH^{\prime} \rightarrow \R$ by
\begin{equation}
\cP_{\cN}(\lambda) := \sup\limits_{f \in \cH\setminus\set{0}} \frac{| \lambda(f) - \lambda(\Pi_{\cN}(f))|}{\Hnorm{f}} \quad \text{for } \lambda \in \cH^{\prime}. \label{def:power-function}
\end{equation}
In the case where $\lambda = \delta_x^{\alpha}$, we might also use the notation
\begin{equation*}
\cP_{\cN}^{\alpha}(x) := \cP_{\cN}(\delta_x^{\alpha}).
\end{equation*}
\end{definition}
In other words, the power-function maps a linear operator $\lambda$ to the norm of the composition of $\lambda$ with the orthogonal projection onto $\cN^{\perp}$:
\begin{align}
 \cP_{\cN}(\lambda) & = \sup\limits_{f \in \cH\setminus\set{0}} \frac{| \lambda(f) - \lambda(\Pi_{\cN}(f))|}{\Hnorm{f}} \nonumber \\ 
		    & = \sup\limits_{f \in \cH\setminus\set{0}} \frac{| \lambda\circ (\id - \Pi_{\cN})(f) |}{\Hnorm{f}} \nonumber \\
		    & = \sup\limits_{f \in \cH\setminus\set{0}} \frac{| \lambda \circ \Pi_{\cN^{\perp}}(f))|}{\Hnorm{f}} \nonumber \\
		    & = \| \lambda \circ \Pi_{\cN^{\perp}} \|_{\cH^{\prime} }
\end{align}
We want to remark that the above definition of the power-function is, in contrast to the power-function introduced in \cite{Schroedl09}, independent of the function $f$ and can be utilized to derive
a-priori error bounds which we show in Corollary \ref{cor: Bound on the interpolation error}.\\
It is easy to see that for a nested sequence of closed subspaces $\cN_{1} \subset \cN_{2} \subset \dots$ the power-function is non-increasing, i.e.\ $P_{\cN_1}(\lambda) \geq P_{\cN_2}(\lambda) \geq \dots$.
For general $\lambda \in \cH^{\prime}$ the evaluation of $P_{\cN}(\lambda)$ is nontrivial, however, using the Riesz representer $v_{\lambda} \in \cH$ of $\lambda$ we obtain
an alternative representation of $\cP_{\cN}(\lambda)$:

\begin{corollary}[Alternative representation of the power-function] \label{cor: Alternative representation of the power-function}
Let $\cH$ be an $\R^m$-RKHS and $\cN \subset \cH$ be a closed subspace. Furthermore,
let $\Pi_{\cN} : \cH \rightarrow \cN$ denote the orthogonal projection onto $\cN$ and $\cP_{\cN} : \cH^{\prime} \rightarrow \R$ the 
power-function. For any $\lambda \in \cH^{\prime}$ let $v_{\lambda} \in \cH$ denote its Riesz representer. Then it holds
\begin{equation*}
\cP_{\cN}(\lambda) = \Hnorm{v_{\lambda} - \Pi_{\cN}(v_{\lambda})} = \Hnorm{\Pi_{\cN^{\perp}}(v_{\lambda})}.
\end{equation*}
\end{corollary}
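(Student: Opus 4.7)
The plan is to combine the identity $\cP_{\cN}(\lambda) = \| \lambda \circ \Pi_{\cN^{\perp}} \|_{\cH^{\prime}}$ (established just before the statement) with Riesz representation and the self-adjointness of orthogonal projections. The second equality is immediate: since $\cN$ is closed, $v_{\lambda}$ admits the orthogonal decomposition $v_{\lambda} = \Pi_{\cN}(v_{\lambda}) + \Pi_{\cN^{\perp}}(v_{\lambda})$ with the two summands orthogonal, so $\Hnorm{v_{\lambda} - \Pi_{\cN}(v_{\lambda})} = \Hnorm{\Pi_{\cN^{\perp}}(v_{\lambda})}$.

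For the first equality, I would first rewrite $\lambda$ via its Riesz representer: for every $f \in \cH$ we have $\lambda(f) = \Hscalprod{f}{v_{\lambda}}$, with $\norm{\lambda}_{\cH^{\prime}} = \Hnorm{v_{\lambda}}$. Applying this to $\Pi_{\cN^{\perp}}(f)$ in place of $f$ yields
\begin{equation*}
\lambda\bigl(\Pi_{\cN^{\perp}}(f)\bigr) = \Hscalprod{\Pi_{\cN^{\perp}}(f)}{v_{\lambda}}.
\end{equation*}
The key ingredient is that the orthogonal projection $\Pi_{\cN^{\perp}}$ is self-adjoint; this follows from the self-adjointness of $\Pi_{\cN}$ (which was established in the proof of Corollary \ref{cor: Reproducing kernel of NX} by splitting inner products through $\Pi_{\cN}g$ and $f-\Pi_{\cN}f$) together with $\Pi_{\cN^{\perp}} = \id - \Pi_{\cN}$. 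Hence
\begin{equation*}
\Hscalprod{\Pi_{\cN^{\perp}}(f)}{v_{\lambda}} = \Hscalprod{f}{\Pi_{\cN^{\perp}}(v_{\lambda})}.
\end{equation*}

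Combining the two, the functional $\lambda \circ \Pi_{\cN^{\perp}}$ coincides with the Riesz functional associated to the vector $\Pi_{\cN^{\perp}}(v_{\lambda}) \in \cH$, so by Riesz representation its dual norm equals $\Hnorm{\Pi_{\cN^{\perp}}(v_{\lambda})}$. Therefore
\begin{equation*}
\cP_{\cN}(\lambda) = \| \lambda \circ \Pi_{\cN^{\perp}} \|_{\cH^{\prime}} = \Hnorm{\Pi_{\cN^{\perp}}(v_{\lambda})} = \Hnorm{v_{\lambda} - \Pi_{\cN}(v_{\lambda})},
\end{equation*}
which is precisely the claim. I do not expect any real obstacle: the only non-mechanical step is invoking the self-adjointness of the projection, which is already available from the preceding corollary and needs only to be transferred from $\Pi_{\cN}$ to $\Pi_{\cN^{\perp}}$ via the identity $\Pi_{\cN^{\perp}} = \id - \Pi_{\cN}$.
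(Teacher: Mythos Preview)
Your proof is correct and follows essentially the same route as the paper's: both hinge on the self-adjointness of the orthogonal projection to move $\Pi_{\cN^{\perp}}$ from $f$ onto $v_{\lambda}$. The only cosmetic difference is that the paper works directly from the supremum definition, applies Cauchy--Schwarz to obtain $\cP_{\cN}(\lambda) \leq \Hnorm{v_{\lambda} - \Pi_{\cN}(v_{\lambda})}$, and then checks that equality is attained at $f = v_{\lambda} - \Pi_{\cN}(v_{\lambda})$; you instead invoke the already established identity $\cP_{\cN}(\lambda) = \|\lambda \circ \Pi_{\cN^{\perp}}\|_{\cH'}$ and then appeal to the Riesz isometry, which packages the Cauchy--Schwarz-plus-equality step into a single citation.
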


\begin{proof}
It follows from the definition of the power-function \eqref{def:power-function}
\begin{equation*}
\cP_{\cN}(\lambda) = \sup\limits_{f \in \cH\setminus\set{0}} \frac{\Hscalprod{v_{\lambda}}{f - \Pi_{\cN}(f)}}{\Hnorm{f}}.
\end{equation*}
Since both $\Pi_{\cN}$ and $\id - \Pi_{\cN}$ are orthogonal projections by assumption and therefore self-adjoint, the Cauchy-Schwarz inequality yields
\begin{equation*}
\cP_{\cN}(\lambda) = \sup\limits_{f \in \cH\setminus\set{0}} \frac{\Hscalprod{v_\lambda - \Pi_{\cN}(v_\lambda)}{f}}{\Hnorm{f}} \leq \Hnorm{v_\lambda - \Pi_{\cN}(v_\lambda)},
\end{equation*}
and equality is reached for $f = v_{\lambda} - \Pi_{\cN}(v_{\lambda})$.
\end{proof}
For the directional point evaluation functional $\delta_{x}^{\alpha}$ the Riesz representer is given by the reproducing kernel $k(\cdot,x)\alpha$. Therefore, we can easily compute the power-function
using the reproducing property of $k$ on $\cH$ and $k_{\cN} $ on $\cN$:

\begin{corollary} \label{cor: Power Function}
For any $x \in \Omega$ and $\alpha \in \R^{m}$ it holds
\begin{equation*}
\cP_{\cN}^{\alpha}(x)^2 = \alpha^T\left(k(x,x) - k_{\cN}(x,x)\right)\alpha = \alpha^T k_{\cN^{\perp}}(x,x)\alpha.
\end{equation*}
\end{corollary}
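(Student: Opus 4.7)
The plan is to apply the alternative representation of the power function (Corollary \ref{cor: Alternative representation of the power-function}) to the particular functional $\lambda = \delta_x^{\alpha}$, whose Riesz representer is $v_\lambda = k(\cdot,x)\alpha$ by the reproducing property \eqref{thm:eq: Reproducing Kernel Condition}. I then want to expand the norm $\Hnorm{v_\lambda - \Pi_{\cN}(v_\lambda)}^2$ and evaluate each inner product by invoking the reproducing property in the right space.

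First I would identify $\Pi_{\cN}(k(\cdot,x)\alpha)$ explicitly. Since $\cN$ is itself an $\R^m$-RKHS with kernel $k_{\cN}$ (Lemma \ref{lem: Closed subspaces are RKHS}), the function $k_{\cN}(\cdot,x)\alpha$ lies in $\cN$ and satisfies $\Hscalprod{g}{k_{\cN}(\cdot,x)\alpha} = g(x)^T\alpha = \Hscalprod{g}{k(\cdot,x)\alpha}$ for every $g \in \cN$. Hence $k(\cdot,x)\alpha - k_{\cN}(\cdot,x)\alpha \perp \cN$, which gives $\Pi_{\cN}(k(\cdot,x)\alpha) = k_{\cN}(\cdot,x)\alpha$.

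Next I would expand
\begin{equation*}
\Hnorm{k(\cdot,x)\alpha - k_{\cN}(\cdot,x)\alpha}^2 = \Hnorm{k(\cdot,x)\alpha}^2 - 2\Hscalprod{k(\cdot,x)\alpha}{k_{\cN}(\cdot,x)\alpha} + \Hnorm{k_{\cN}(\cdot,x)\alpha}^2
\end{equation*}
and compute the three terms by the reproducing property: $\Hnorm{k(\cdot,x)\alpha}^2 = \alpha^T k(x,x)\alpha$, the cross term equals $(k_{\cN}(x,x)\alpha)^T\alpha = \alpha^T k_{\cN}(x,x)\alpha$ using symmetry of $k_{\cN}$, and the last term equals $\alpha^T k_{\cN}(x,x)\alpha$ by the same argument (alternatively by the reproducing property inside $\cN$). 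Collecting terms yields the first equality $\cP_{\cN}^{\alpha}(x)^2 = \alpha^T(k(x,x)-k_{\cN}(x,x))\alpha$.

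For the second equality I would use the other form in Corollary \ref{cor: Alternative representation of the power-function}, namely $\cP_{\cN}(\lambda) = \Hnorm{\Pi_{\cN^\perp}(v_\lambda)}$. Since $\cN^\perp$ is closed, it is also an RKHS with some reproducing kernel $k_{\cN^\perp}$, and the same argument as above gives $\Pi_{\cN^\perp}(k(\cdot,x)\alpha) = k_{\cN^\perp}(\cdot,x)\alpha$ with norm squared equal to $\alpha^T k_{\cN^\perp}(x,x)\alpha$. Comparing the two computed expressions then yields the identity $k_{\cN^\perp}(x,x) = k(x,x)-k_{\cN}(x,x)$ as a by-product. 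The only delicate point is the bookkeeping between the reproducing properties in $\cH$, $\cN$, and $\cN^\perp$; once one keeps track of where each kernel evaluation lives, the computation is routine.
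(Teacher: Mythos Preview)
Your proof is correct and follows essentially the same approach as the paper: apply Corollary \ref{cor: Alternative representation of the power-function} with $v_\lambda = k(\cdot,x)\alpha$, identify $\Pi_{\cN}(k(\cdot,x)\alpha) = k_{\cN}(\cdot,x)\alpha$, expand the squared norm, and evaluate each term via the reproducing property. The only cosmetic difference is in the second equality, where the paper invokes $\Pi_{\cN^\perp} = \id - \Pi_{\cN}$ (via Corollary \ref{cor: Reproducing kernel of NX}) rather than recomputing $\Hnorm{\Pi_{\cN^\perp}(k(\cdot,x)\alpha)}^2$ directly, but your route is equally valid.
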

\begin{proof}
By Corollary \ref{cor: Alternative representation of the power-function} it holds
\begin{align*}
\cP_{\cN}^{\alpha}(x)^2 &= \Hnorm{k(\cdot,x)\alpha - \Pi_{\cN}k(\cdot,x)\alpha}^2 \\
& = \Hnorm{k(\cdot,x)\alpha - k_{\cN}(\cdot,x)\alpha}^2 \\
& = \Hscalprod{k(\cdot,x)\alpha - k_{\cN}(\cdot,x)\alpha}{k(\cdot,x)\alpha - k_{\cN}(\cdot,x)\alpha}\\
& = \Hscalprod{k(\cdot,x)\alpha}{k(\cdot,x)\alpha} - 2 \Hscalprod{k(\cdot,x)\alpha}{k_{\cN}(\cdot,x)\alpha} + \Hscalprod{k_{\cN}(\cdot,x)\alpha}{k_{\cN}(\cdot,x)\alpha} \\
& = \alpha^Tk(x,x)\alpha -2 \alpha^Tk_{\cN}(x,x)\alpha +\alpha^Tk_{\cN}(x,x)\alpha \\
& = \alpha^T\left(k(x,x) - k_{\cN}(x,x)\right)\alpha \\
& = \alpha^T k_{\cN^{\perp}}(x,x)\alpha.
\end{align*}
Here the identity $\alpha^T(k(x,x) - k_{\cN}(x,x))\alpha = \alpha^Tk_{\cN^{\perp}}\alpha$ follows from  Corollary \ref{cor: Reproducing kernel of NX} as $\Pi_{\cN^{\perp}} = \id - \Pi_{\cN}$.
\end{proof}

\begin{corollary}[Bound on the interpolation error] \label{cor: Bound on the interpolation error}
Let $\cH$ be an $\R^m$-RKHS with reproducing kernel $k$, let $\cN \subset \cH$ be a closed subspace with reproducing kernel $k_{\cN}$ and $\Pi_{\cN} : \cH \rightarrow \cN$ the orthogonal projection onto $\cN$.
Then it holds for any $f \in \cH$ and $\alpha \in \R^m$
\begin{equation}
\left|\left( f(x) - (\Pi_{\cN}f)(x)\right)^T\alpha\right| \leq \cP_{\cN}^{\alpha}(x) \Hnorm{f - \Pi_{\cN}f} \leq \cP_{\cN}^{\alpha}(x) \Hnorm{f}  , \quad \forall \, x \in \Omega \label{eq: Directional error bound}
\end{equation}
and
\begin{align*}
 \| f(x) - (\Pi_{\cN}f)(x)\|_2 & \leq \| k(x,x)-k_{\cN}(x,x) \|_2^{1/2} \Hnorm{f - \Pi_{\cN}f}, \\
 \| f(x) - (\Pi_{\cN}f)(x)\|_{\infty} & \leq \max\limits_{i=1,\dots,m} |k(x,x)_{ii}-k_{\cN}(x,x)_{ii}|^{1/2} \Hnorm{f - \Pi_{\cN}f}, \\
 \| f(x) - (\Pi_{\cN}f)(x)\|_1 & \leq \sqrt{m} \| k(x,x)-k_{\cN}(x,x) \|_2^{1/2} \Hnorm{f - \Pi_{\cN}f}.
\end{align*}
Here $\| \cdot \|_2$ denotes the spectral norm on $\R^{m \times m}$.
\end{corollary}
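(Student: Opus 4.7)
The plan is to derive the directional bound first and then obtain each norm bound by a judicious choice of the direction $\alpha$, combined with standard norm inequalities on $\R^m$.

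For the directional inequality, I would rewrite $(f(x) - (\Pi_{\cN}f)(x))^T\alpha$ as $\delta_x^{\alpha}(f - \Pi_{\cN}f)$. The key observation is that $\Pi_{\cN}$ is a projection, so $\Pi_{\cN}(f - \Pi_{\cN}f) = 0$, and therefore $\delta_x^{\alpha}(f - \Pi_{\cN}f) = \delta_x^{\alpha}(g) - \delta_x^{\alpha}(\Pi_{\cN} g)$ with $g := f - \Pi_{\cN}f$. Applying the definition of $\cP_{\cN}^{\alpha}(x) = \cP_{\cN}(\delta_x^{\alpha})$ to $g$ yields the first estimate in \eqref{eq: Directional error bound}. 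The second estimate follows from the Pythagorean identity $\Hnorm{f}^2 = \Hnorm{\Pi_{\cN}f}^2 + \Hnorm{f - \Pi_{\cN}f}^2$ implied by the orthogonality of $\Pi_{\cN}$.

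For the spectral-norm bound, set $e(x) := f(x) - (\Pi_{\cN}f)(x) \in \R^m$ and apply the directional bound with the specific choice $\alpha = e(x)$. This gives $\Hnorm{e(x)}_2^2 = e(x)^T e(x) \leq \cP_{\cN}^{e(x)}(x)\,\Hnorm{f - \Pi_{\cN}f}$. Using Corollary \ref{cor: Power Function}, $\cP_{\cN}^{e(x)}(x)^2 = e(x)^T(k(x,x) - k_{\cN}(x,x))e(x) \leq \|k(x,x) - k_{\cN}(x,x)\|_2\,\|e(x)\|_2^2$. Substituting and cancelling one factor of $\|e(x)\|_2$ (the case $e(x) = 0$ is trivial) produces the stated spectral-norm inequality. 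For the $\infty$-norm bound, the same strategy is used with $\alpha = e_i$, the $i$-th standard basis vector, so that $\cP_{\cN}^{e_i}(x)^2 = (k(x,x) - k_{\cN}(x,x))_{ii}$ and $|e(x)_i| \leq \cP_{\cN}^{e_i}(x)\,\Hnorm{f - \Pi_{\cN}f}$; taking the maximum over $i$ concludes. Finally, the $1$-norm bound follows from the $2$-norm bound already established together with the elementary inequality $\|v\|_1 \leq \sqrt{m}\,\|v\|_2$ for $v \in \R^m$.

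I expect no serious obstacle: the only subtle point is choosing $\alpha$ to be the error vector $e(x)$ itself in the spectral-norm case so that the quadratic form estimate from Corollary \ref{cor: Power Function} can be leveraged, and verifying that this choice is legitimate (it is, since the directional bound holds for every $\alpha \in \R^m$ and $x \in \Omega$). Everything else reduces to linear algebra on $\R^m$ combined with the directional estimate and Corollary \ref{cor: Power Function}.
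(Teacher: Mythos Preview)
Your proposal is correct and follows essentially the same route as the paper: the paper also derives the directional bound first (via the reproducing property and idempotence of $\id-\Pi_{\cN}$, which is exactly what your argument with $g=f-\Pi_{\cN}f$ encodes), then picks $\alpha=e(x)$ for the $2$-norm bound and $\alpha=e_i$ for the $\infty$-norm bound. The only cosmetic difference is in the $1$-norm case: the paper chooses $\alpha=\sum_{i}\sign\bigl((f(x)-\Pi_{\cN}f(x))_i\bigr)e_i$ (so that $e(x)^T\alpha=\|e(x)\|_1$ and $\|\alpha\|_2\le\sqrt{m}$) and applies the directional bound directly, whereas you invoke $\|v\|_1\le\sqrt{m}\,\|v\|_2$ on the already-established $2$-norm estimate; both yield the identical constant $\sqrt{m}\,\|k(x,x)-k_{\cN}(x,x)\|_2^{1/2}$.
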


\begin{proof}
 It holds
 \begin{align} \label{eq: directional bound}
  \left|\left( f(x) - (\Pi_{\cN}f)(x)\right)^T\alpha\right| & = \left|\Hscalprod{f - \Pi_{\cN}f}{k(\cdot,x)\alpha}\right| \nonumber \\
  & = \left|\Hscalprod{ (\id-\Pi_{\cN})f}{k(\cdot,x)\alpha}\right| \nonumber \\
  & = \left|\Hscalprod{ (\id-\Pi_{\cN})^2f}{k(\cdot,x)\alpha}\right| \nonumber \\
  & = \left|\Hscalprod{(\id-\Pi_{\cN})f}{(\id-\Pi_{\cN})k(\cdot,x)\alpha}\right| \\
  & \leq \Hnorm{f-\Pi_{\cN}f}\Hnorm{k(\cdot,x)\alpha - k_{\cN}(\cdot,x)\alpha} \nonumber \\
  & = \Hnorm{f-\Pi_{\cN}f} \cP_{\cN}^{\alpha}(x). \nonumber
 \end{align}
Choosing $\alpha = f(x) - (\Pi_{\cN}f)(x)$ and applying Corollary \ref{cor: Power Function} we get
\begin{align*}
 \| f(x) - (\Pi_{\cN}f)(x)\|_2^2 &  \leq   \left( \alpha^T \left(k(x,x)-k_{\cN}(x,x)\right)\alpha\right)^{1/2} \Hnorm{f-\Pi_{\cN}f}\\
				 & \leq  \| \alpha \|_2 \| (k(x,x)-k_{\cN}(x,x) \|_2^{1/2}\Hnorm{f-\Pi_{\cN}f}
\end{align*}
and after dividing by $\| \alpha \| = \| f(x) - (\Pi_{\cN}f)(x)\|_2$
\begin{equation*}
  \| f(x) - (\Pi_{\cN}f)(x)\|_2 \leq \| k(x,x)-k_{\cN}(x,x) \|_2^{1/2} \Hnorm{f-\Pi_{\cN}f}
\end{equation*}
Choosing $\alpha = e_i$ results in
\begin{equation*}
 |f(x)_i - \left(\Pi_{\cN}(f)(x)\right)_i|\leq \|f\|_{\cH} \cP_{\cN}^{e_i}(x) \leq |k(x,x)_{ii}-k_{\cN}(x,x)_{ii}|^{1/2} \Hnorm{f-\Pi_{\cN}f}
\end{equation*}
Maximization over $i \in \set{ 1,\dots,m}$ gives the desired bound.\\
For the last inequality the choice $\alpha = \sum\limits_{i=1}^m \sign(\left(f(x) - \Pi_{\cN}f(x)\right)^Te_i)e_i$ gives the desired result, as it holds  $\|\alpha\| = \sqrt{m}$.
\end{proof}




\section{Separable matrix-valued kernels} \label{sec: Separable matrix-valued kernels}

In order to practically solve interpolation problems, we need to take a look at how matrix-valued kernels can be constructed. To this end, we consider matrix-valued kernels which stem from
scalar-valued kernels. In particular, we focus on the notion of separable kernels, see \cite{Alvarez2012}, and we introduce a new subtype for which error estimation
via the power-function can be traced back to the power-functions of the scalar-valued kernels that were used to generate the matrix-valued kernel. For further details and different construction methods we refer 
to previous work, e.g. \cite{Beatson2011b,MG2014,Carmeli2006}, in this field.


\begin{definition}[Separable Kernels] \label{def: Separable Kernels}
Let $k : \Omega \times \Omega \rightarrow \R^{m \times m}$ be a matrix-valued kernel, let $Q_1,\dots,Q_p \in \R^{m \times m}$ be a collection of symmetric matrices
and $k_1,\dots,k_p : \Omega \times \Omega \rightarrow  \R$ a collection of scalar-valued kernels, such that
\begin{equation}
k(x,y) = \sum\limits_{i=1}^{p} k_i(x,y)Q_i, \quad \text{ for all } x,y \in \Omega. \label{def: separable kernel}
\end{equation}
We call $(k_i,Q_i)_{i=1}^p$ a \colemph{decomposition} of $k$ and $p$ its \colemph{length}. If $p$ is minimal then the kernel $k$ is called separable of order $p$.
\end{definition}

To guarantee the (strict) positive definiteness of the kernel $k$ further assumption on the scalar-valued kernels $k_i$ and symmetric matrices $Q_i$ have to be made.
Taking a closer look at the Gramian matrix $k(X,X)$ for some set $X = \set{ x_1,\dots,x_n} \subset \Omega$ it is easy to see that the identity
\begin{equation}
 k(X,X) = \sum\limits_{i=1}^p k_i(X,X) \kron Q_i
\end{equation}
holds, where $ k_i(X,X) \kron Q_i$ denotes the Kronecker product. Since sums and Kronecker products of positive (semi-)definite matrices are positive (semi-)definite, we can conclude
that the positive definiteness of $k_i$ and positive (semi-)definiteness of $Q_i$ is sufficient to guarantee that the kernel $k$ is  positive definite. In order to
guarantee strict positive definiteness of $k$ further assumptions on $k_i$ and $Q_i$ have to be made:

\begin{lemma}[Separable kernel is s.p.d] \label{lem: Separable kernel is s.p.d}
 Let $k : \Omega \times \Omega  \rightarrow \R^{m \times m}$ be a separable kernel of order $p$ with decomposition $(k_i,Q_i)_{i=1}^{p}$.
 If the kernels $k_1,\dots,k_p$ are s.p.d. and the matrices $Q_1,\dots,Q_p \succeq 0$ are positive semi-definite, such that $\sum_{i=1}^p Q_i \succ 0$ 
 is positive definite, then $k$ is s.p.d.
\end{lemma}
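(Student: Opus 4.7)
The plan is to show that for any finite set of pairwise distinct points $X = \{x_1,\dots,x_n\} \subset \Omega$, the Gramian matrix
\[
k(X,X) = \sum_{i=1}^p k_i(X,X) \otimes Q_i
\]
satisfies $\balpha^T k(X,X) \balpha > 0$ for every $\balpha \in \R^{mn}\setminus\{0\}$. Since each factor $k_i(X,X)$ is positive definite (because $k_i$ is s.p.d.\ and the $x_j$ are distinct) and each $Q_i$ is positive semi-definite, every summand $k_i(X,X)\otimes Q_i$ is positive semi-definite, so the quadratic form is automatically nonnegative. The task is therefore to upgrade this to strict positivity using the hypothesis $\sum_i Q_i \succ 0$.

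I would decompose $\balpha = (\alpha_1^T,\dots,\alpha_n^T)^T$ with $\alpha_j \in \R^m$ and write, for each $i$,
\[
\balpha^T \bigl(k_i(X,X) \otimes Q_i\bigr)\balpha = \sum_{j,l=1}^n k_i(x_j,x_l)\, \alpha_j^T Q_i \alpha_l.
\]
Using a symmetric square root (or any factorization $Q_i = L_i L_i^T$), this equals $\sum_s \gamma_s^T k_i(X,X) \gamma_s$, where $\gamma_s \in \R^n$ collects the $s$-th components of $L_i^T\alpha_1,\dots,L_i^T\alpha_n$. Each term is nonnegative, and since $k_i(X,X)$ is positive definite, this sum vanishes if and only if $L_i^T\alpha_j = 0$ for every $j$, equivalently $Q_i\alpha_j = 0$ for every $j$.

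Now suppose $\balpha^T k(X,X)\balpha = 0$. Since all $p$ summands are nonnegative and sum to zero, each must vanish, so by the previous step $Q_i\alpha_j = 0$ for every $i$ and every $j$. Summing over $i$ gives $\bigl(\sum_{i=1}^p Q_i\bigr)\alpha_j = 0$ for each $j$, and the assumption $\sum_i Q_i \succ 0$ forces $\alpha_j = 0$, hence $\balpha = 0$. This contradicts $\balpha \ne 0$, establishing strict positivity.

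The only real subtlety is the step translating $\balpha^T(k_i(X,X)\otimes Q_i)\balpha = 0$ into $Q_i\alpha_j = 0$ for all $j$, because neither $k_i(X,X)$ nor $Q_i$ alone controls the full Kronecker null space; one genuinely needs to factor $Q_i$ and exploit the strict positive definiteness of $k_i(X,X)$ to separate the $n$ point-contributions. Once that is in place, the closing argument via $\sum_i Q_i \succ 0$ is immediate.
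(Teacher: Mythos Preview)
Your argument is correct, but the paper proceeds differently and more directly. Instead of a null-space analysis, it bounds each Gramian $K_i := k_i(X,X)$ from below by $\lambda I_n$, where $\lambda>0$ is the smallest eigenvalue occurring among $K_1,\dots,K_p$. Using that $(K_i-\lambda I_n)\otimes Q_i \succeq 0$ for each $i$, summation gives
\[
k(X,X)=\sum_{i=1}^p K_i\otimes Q_i \;\succeq\; \sum_{i=1}^p (\lambda I_n)\otimes Q_i \;=\; \lambda\, I_n\otimes\!\left(\sum_{i=1}^p Q_i\right)\;\succ\;0,
\]
and the proof is complete in one line. This eigenvalue-bound approach is shorter and, as a bonus, yields an explicit lower bound on the smallest eigenvalue of $k(X,X)$, namely $\lambda\cdot\lambda_{\min}\!\bigl(\sum_i Q_i\bigr)$. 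Your route, by contrast, dissects the kernel of each Kronecker summand via a factorization $Q_i=L_iL_i^T$; this is perfectly valid and perhaps more transparent about \emph{why} strict positivity holds (it identifies exactly which $\balpha$ could annihilate a single summand), but it involves an extra layer of bookkeeping that the paper sidesteps entirely.
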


\begin{proof}
 Let $X = \set{ x_1,\dots x_n} \subset \Omega$ be a set of pairwise distinct points.
 Furthermore, let $K_i := k_i(X,X) \in \R^{n\times n}$ and $K := k(X,X) \in \R^{mn \times mn}$. It holds
 \begin{equation*}
  K = \sum\limits_{i = 1}^{p} K_i \kron Q_i.
 \end{equation*}
 Since the kernels $k_i$, $i= 1,\dots,p$ are s.p.d the matrices $K_1,\dots,K_p$ are positive definite which implies $K_i \succeq \lambda I_{n}$, where
 \begin{equation*}
  \lambda = \min\set{ \lambda^{\prime}  | \, \lambda^{\prime} \text{ is an eigenvalue of } K_i \text{ for some } i \in \set{1,\dots,p} } > 0.
 \end{equation*}
 Therefore,
 \begin{equation*}
  K = \sum\limits_{i = 1}^{p} K_i \kron Q_i \succeq \sum\limits_{i=1}^{p} \lambda I_n \kron Q_i = 
  \lambda I_{n} \kron \left( \sum\limits_{i=1}^{p} Q_i\right) \succ 0.
 \end{equation*}
\end{proof}
It is worthwhile to mention that the assumption $\sum_{i=1}^p Q_i \succ 0$ also guarantees that the kernel $k$ is universal, c.f \cite{Steinwart2001,MXZ2006},
if the scalar-valued kernels $k_i$ are universal. This means that for every compact subset $\Omega_{c} \subset \Omega$ the space
\begin{equation*}
 \myspan \set{ k(\cdot,x)\alpha | \, x \in \Omega_{c}, \, \alpha \in \R^m }
\end{equation*}
is dense in the set of continuous function $C(\Omega_{c})$ over $\Omega_{c}$. For a proof we refer to \cite{CMPY2008}.

In the above case of Definition \ref{def: Separable Kernels} we call $p$ minimal if any other decomposition of $k$ has at least length $p$. This minimality can be directly related to the linear independency
of the set of scalar-valued kernels $\set{k_i}_{i=1}^{p}$ and symmetric matrices $\set{ Q_i}_{i=1}^p$:

\begin{lemma}[Sufficient and necessary minimality condition] \label{lem: Sufficient and necessary minimality condition}
Let $k$ be a separable kernel such that there exists a decomposition of length $p$. Then the following properties are equivalent
\begin{itemize}
 \item[i)] $p$ is minimal.
 \item[ii)] For any decomposition $(k_i,Q_i)_{i=1}^p$ of length $p$ the sets $\set{k_1,\dots,k_p}$ and $\set{Q_1,\dots,Q_p}$ are linearly independent, respectively.
\end{itemize}
\end{lemma}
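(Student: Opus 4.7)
The plan is to prove both implications by contrapositive, using the elementary observation that a linear dependence among the $k_i$'s or the $Q_i$'s in a given decomposition always allows us to collapse two summands into one, while conversely, any shorter decomposition can be padded up to length $p$ in a way that forces dependence.

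For the direction (i) $\Rightarrow$ (ii), I would assume that some length-$p$ decomposition $(k_i,Q_i)_{i=1}^p$ has, say, a linearly dependent matrix family; without loss of generality write $Q_p = \sum_{i=1}^{p-1} c_i Q_i$. Substituting into \eqref{def: separable kernel} gives
\begin{equation*}
k(x,y) = \sum_{i=1}^{p-1} \bigl(k_i(x,y) + c_i k_p(x,y)\bigr) Q_i,
\end{equation*}
which is a valid decomposition of length $p-1$ (linear combinations of symmetric scalar functions are again scalar kernels in the sense of Definition \ref{def: Matrix-valued kernel}), contradicting minimality of $p$. The case where $\{k_1,\dots,k_p\}$ is linearly dependent is symmetric: writing $k_p=\sum_{i=1}^{p-1} c_i k_i$ and collecting yields the length-$(p-1)$ decomposition $(k_i, Q_i + c_i Q_p)_{i=1}^{p-1}$, and sums of symmetric matrices remain symmetric.

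For the direction (ii) $\Rightarrow$ (i), I would again argue by contraposition. Suppose $p$ is \emph{not} minimal, so that there exists a decomposition $(\tilde k_j,\tilde Q_j)_{j=1}^{p^*}$ of $k$ with $p^* < p$. I then construct a length-$p$ decomposition that violates (ii) by splitting one summand into several proportional pieces: pick scalars $c_1,\dots,c_{r}$ with $r := p - p^* + 1 \ge 2$ and $\sum_{j=1}^r c_j = 1$, replace the single summand $\tilde k_1(x,y)\tilde Q_1$ by the $r$ summands $(c_j \tilde k_1(x,y))\tilde Q_1$ for $j=1,\dots,r$, and keep $(\tilde k_i,\tilde Q_i)_{i=2}^{p^*}$ unchanged. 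The resulting decomposition has length $(p-p^*+1) + (p^*-1) = p$, but its matrix family contains $\tilde Q_1$ repeated $r \ge 2$ times (trivially dependent) and, equivalently, its scalar-kernel family contains several scalar multiples of $\tilde k_1$, contradicting (ii).

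The only real thing to verify in either direction is that the manipulated pieces are still admissible in the sense of Definition \ref{def: Separable Kernels}, i.e.\ that the new scalar components are symmetric functions and the new matrices are symmetric — both are immediate from the linearity of the two symmetry conditions — so I do not expect any serious obstacle beyond ordering the two contrapositive arguments carefully and being explicit that in (ii) the quantifier ``for any decomposition of length $p$'' really does refer to decompositions of length exactly $p$, which is precisely what the padding construction produces.
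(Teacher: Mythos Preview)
Your argument for (i) $\Rightarrow$ (ii) is essentially identical to the paper's: assume a dependence, absorb one summand into the others, obtain a length-$(p-1)$ decomposition.

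For (ii) $\Rightarrow$ (i) your route is different. The paper does \emph{not} argue by padding. Instead it assumes that a decomposition $(\hat k_j,\hat Q_j)_{j=1}^{q}$ with $q<p$ exists, fixes a length-$p$ decomposition $(k_i,Q_i)_{i=1}^p$ with independent families, vectorizes the matrices to write
\[
Q\begin{pmatrix}k_1\\\vdots\\k_p\end{pmatrix}=\hat Q\begin{pmatrix}\hat k_1\\\vdots\\\hat k_q\end{pmatrix},\qquad Q=[\myvec Q_1\;\cdots\;\myvec Q_p],
\]
and uses that $\rank Q=p$ (hence $Q$ has a left inverse) to conclude $\myspan\{k_1,\dots,k_p\}\subset\myspan\{\hat k_1,\dots,\hat k_q\}$, contradicting the independence of the $k_i$. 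Your padding construction is more elementary and completely avoids the linear-algebra machinery; it exploits the universal quantifier in (ii) directly by exhibiting one bad length-$p$ decomposition. The paper's argument, on the other hand, implicitly proves the slightly stronger statement that the \emph{existence} of a single length-$p$ decomposition with independent $k_i$'s and $Q_i$'s already forces $p$ to be minimal. Both arguments are correct; yours is shorter, the paper's gives a bit more structural information.
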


\begin{proof}
 ``$\Rightarrow$'' Let $(k_i,Q_i)_{i=1}^p$ be a decomposition of length $p$. Assume that either $\set{k_1,\dots,k_p}$ or $\set{Q_1,\dots,Q_p}$ is linearly dependent. W.l.o.g. we can assume that either
 \begin{equation*}
  k_1 = \sum\limits_{i=2}^p \alpha_i k_i \quad \text{ or } \quad Q_1 = \sum\limits_{i=2}^p \beta_i Q_i.
 \end{equation*}
Therefore,
\begin{equation*}
 k = \sum\limits_{i=1}^p k_iQ_i = \sum\limits_{i=2}^p k_i(Q_i +  \alpha_iQ_1) \quad \text{ or } \quad k = \sum\limits_{i=2}^p (k_i + \beta_ik_1)Q_i.
\end{equation*}
In either case we found a smaller decomposition which contradicts the minimality of $p$ since $Q_i + \alpha_iQ_1$ is still symmetric and $k_i + \beta_ik_1$ is still a m.v.-kernel.\\
``$\Leftarrow$'' Let $(k_i,Q_i)_{i=1}^p$ be a decomposition of length $p$ such that $\set{k_1,\dots,k_p}$ and $\set{Q_1,\dots,Q_p}$ are linearly independent.
Assume there exists a decomposition $(\hat{k}_i,\hat{Q}_i)_{i=1}^{q}$ of length $q < p$. Let $\myvec : \R^{m\times m} \rightarrow \R^{m^2}$ denote the vectorization operator. We have
\begin{equation*}
 \sum\limits_{i=1}^p k_iQ_i = k = \sum\limits_{j=1}^q \hat{k}_j\hat{Q}_j
\end{equation*}
and thus
\begin{equation*}
 \sum\limits_{i=1}^p \myvec(Q_i)k_i = \sum\limits_{j=1}^q \myvec(\hat{Q}_j) \hat{k}_j.
\end{equation*}
Setting $Q := [\myvec(Q_1) \dots \myvec(Q_p)] \in \R^{m^2 \times p}$, $\hat{Q} := [\myvec(\hat{Q}_1) \dots \myvec(\hat{Q}_q)] \in \R^{m^2 \times q}$ we get
\begin{equation}
 Q \begin{pmatrix}
    k_1 \\ \vdots \\ k_p 
   \end{pmatrix}
= \hat{Q} \begin{pmatrix}
           \hat{k}_1 \\ \vdots \\ \hat{k}_q
          \end{pmatrix}. \label{eq: vectorization}
\end{equation}
Since $\set{Q_1,\dots,Q_p}$ is linearly indendent it holds $\rank(Q) = p$ and, therefore, there exists a left inverse $A \in \R^{p \times m^2}$, i.e.\, $AQ = I_p$.
Multiplying both sides in \eqref{eq: vectorization} with $A$ from the left, we get
\begin{equation*}
 \begin{pmatrix}
  k_1 \\ \vdots \\ k_p
 \end{pmatrix}
 = A\hat{Q}
 \begin{pmatrix}
  \hat{k}_1 \\ \vdots \\ \hat{k}_q
 \end{pmatrix}
  = \hat{A}
  \begin{pmatrix}
   \hat{k}_1 \\ \vdots \\ \hat{k}_q
  \end{pmatrix}
\end{equation*}
with $\hat{A} := A\hat{Q} \in \R^{p \times q}$. Ultimately, we get $\myspan\set{k_1,\dots,k_p} \subset \myspan\set{\hat{k}_1,\dots,\hat{k}_q}$ which contradicts the linear independency 
of $\set{k_1,\dots,k_p}$.
\end{proof}

It is clear that $k$ given by \eqref{def: separable kernel} is a matrix-valued kernel with regards to Definition \ref{def: Matrix-valued kernel}, as
\begin{equation*}
 k(x,y)^T = \left(\sum\limits_{i=1}^{p} k_i(x,y)Q_i\right)^T = \sum\limits_{i=1}^{p} k_i(x,y)Q_i^T = \sum\limits_{i=1}^{p} k_i(y,x)Q_i = k(y,x).
\end{equation*}
However, the minimality of $p$ by no means implies the uniqueness of the decomposition in the sense that for decompositions $(k_i,Q_i)_{i=1}^p$ and $( \hat{k}_i,\hat{Q}_i)_{i=1}^p$ there exists a permutation
$\iota$ of $ \{ 1, \dots, p \}$ such that 
\begin{equation*}
 k_iQ_i = \hat{k}_{\iota(i)}\hat{Q}_{\iota(i)}.
\end{equation*}
This is illustrated by the following example:
\begin{example}
 Let $k_1,k_2 : \Omega \times \Omega \rightarrow \R$ denote two linearly independent scalar-valued kernels. Then $k : \Omega \times \Omega \rightarrow \R^2$ given by
 \begin{equation*}
  k(x,y) := \begin{pmatrix}
        k_1(x,y) & 0 \\ 0 & k_1(x,y) + k_2(x,y)
       \end{pmatrix}
 \end{equation*}
  denotes a matrix-valued-kernel which has infinitely many minimal decompositions. Let $\lambda \in \R$, then
\begin{equation*}
k(x,y) = k_1(x,y) Q_1(\lambda)
+ ((1-\lambda)k_1+k_2)(x,y) Q_2,
 \end{equation*} 
 where
\begin{equation*}
 Q_1(\lambda) = \begin{pmatrix}
       1 & 0 \\ 0 & \lambda
      \end{pmatrix}, \quad
 Q_2 = \begin{pmatrix}
             0 & 0 \\ 0 & 1
            \end{pmatrix}.
\end{equation*}
\end{example}
We note that there exists only one decomposition for which the spaces spanned by the columns of $Q_1(\lambda)$ and $Q_2$ have zero intersection. This leads us 
to the definition of a subclass of separable kernels:

\begin{definition}[Uncoupled separable kernels] \label{def: Uncoupled separable Kernels}
Let $k : \Omega \times \Omega \rightarrow \R^{m \times m}$ be a separable matrix-valued kernel and $(k_i,Q_i)_{i=1}^p$ be a decomposition.
The decomposition is called \colemph{uncoupled} if
\begin{equation}
 \rank\left(\sum\limits_{i=1}^p Q_i\right) = \sum\limits_{i=1}^p \rank(Q_i) \label{def:eq: Uncoupled condition}
 \end{equation}
If there exists at least one uncoupled decomposition, the kernel is also called \colemph{uncoupled}.
\end{definition}
Using the abbreviation $Q := \sum\limits_{i=1}^p Q_i$, the rank condition \eqref{def:eq: Uncoupled condition} is equivalent to the assumption that the range
$R(Q) := \myspan \set{ Q\alpha | \alpha \in \R^m }$ is equal to the direct sum of the ranges $R(Q_i)$ of the individual matrices. We will state this in the following
Lemma:

\begin{lemma} \label{lem: Sum of Ranges}
 Let $Q_1. \dots, Q_p \in \R^{m \times m}$ be symmetric matrices. Then the following statements are equivalent
 \begin{itemize}
  \item[i)] $ \rank\left(\sum\limits_{i=1}^p Q_i\right) = \sum\limits_{i=1}^p \rank(Q_i) $
  \item[ii)] $ R\left( \sum\limits_{i=1}^p Q_i\right) = \bigoplus\limits_{i=1}^p R(Q_i)$.
 \end{itemize}
\end{lemma}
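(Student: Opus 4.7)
The plan is to prove both implications by a short dimension count, relying on two standard linear-algebra facts: the always-true inclusion $R(Q_1+\cdots+Q_p) \subseteq R(Q_1)+\cdots+R(Q_p)$ (since $(\sum_i Q_i)\alpha = \sum_i Q_i\alpha$), and the inequality $\dim(V_1+\cdots+V_p) \le \sum_i \dim V_i$ with equality if and only if the sum is direct. Symmetry of the $Q_i$ is not actually needed for either direction; it is simply the setting of the surrounding separable-kernel discussion.

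For (i) $\Rightarrow$ (ii), I would chain the two facts above into
\begin{equation*}
\sum_{i=1}^p \rank(Q_i) = \rank\Bigl(\sum_{i=1}^p Q_i\Bigr) = \dim R\Bigl(\sum_{i=1}^p Q_i\Bigr) \le \dim \Bigl(\sum_{i=1}^p R(Q_i)\Bigr) \le \sum_{i=1}^p \rank(Q_i),
\end{equation*}
so every inequality must be an equality. Equality in the first inequality, combined with the inclusion $R(\sum_i Q_i) \subseteq \sum_i R(Q_i)$ and finite-dimensionality (a subspace of the same dimension as the ambient space must coincide with it), yields the set equality $R(\sum_i Q_i) = \sum_i R(Q_i)$. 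Equality in the second inequality says that $\sum_i R(Q_i)$ has dimension equal to $\sum_i \dim R(Q_i)$, which is precisely the criterion that the subspace sum is a direct sum. Together these give (ii).

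For (ii) $\Rightarrow$ (i), I would simply take dimensions on both sides of $R(\sum_i Q_i) = \bigoplus_i R(Q_i)$ and use that direct sums add dimensions, giving $\rank(\sum_i Q_i) = \sum_i \dim R(Q_i) = \sum_i \rank(Q_i)$ in one line.

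There is essentially no obstacle: the whole argument is bookkeeping, and the two enabling facts are textbook. The only mild subtlety I would spell out for clarity is the step that upgrades a dimension equality between nested finite-dimensional subspaces into an actual subspace equality, since that is the point where the (i) $\Rightarrow$ (ii) argument crosses from numerical equalities back to set-theoretic ones.
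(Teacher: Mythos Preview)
Your proof is correct and follows essentially the same dimension-counting strategy as the paper: both use the inclusion $R(\sum_i Q_i)\subseteq\sum_i R(Q_i)$ together with the rank hypothesis to force equality of the nested subspaces and directness of the sum. Your chained inequality is in fact a bit cleaner than the paper's argument, which proceeds by contradiction on pairwise intersections before invoking the same dimension comparison.
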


\begin{proof}
 ``$\Rightarrow$'' We first show that the sum is direct. It is sufficient to show that $R(Q_i) \cap R(Q_j) = \set{0}$ for $i \neq j$. W.l.o.g. we assume $i=1$ and $j=2$ and 
 $R(Q_1) \cap R(Q_2) \neq \set{0}$. It follows $\dim( R(Q_1) \cap R(Q_1) ) \geq 1$ and thus
 \begin{align*}
  \rank\left(\sum\limits_{i=1}^p Q_i\right) & = \dim R\left(\sum\limits_{i=1}^p Q_i\right) \\
  & \leq \dim R(Q_1) + \dim R(Q_2) - \dim(R(Q_1)\cap R(Q_2)) + \dim R\left(\sum\limits_{i=3}^p Q_i\right) \\
  & < \sum\limits_{i=1}^p \dim R(Q_i) = \sum\limits_{i=1}^p \rank(Q_i),
 \end{align*}
 which contradicts $i)$. It is obvious that $R\left( \sum\limits_{i=1}^p Q_i\right) \subset \bigoplus\limits_{i=1}^p R(Q_i)$ and by $i)$ the vector spaces have the 
 same dimension and are therefore equal. \\
 ``$\Leftarrow$'' Since the sum is direct it holds
 \begin{equation*}
  \rank\left(\sum\limits_{i=1}^p Q_i\right) = \dim R\left(\sum\limits_{i=1}^p Q_i\right) = \dim\left(\bigoplus\limits_{i=1}^p R(Q_i)\right) = \sum\limits_{i=1}^p \dim R(Q_i)
  = \sum\limits_{i=1}^p \rank(Q_i).
 \end{equation*}
\end{proof}
With the notion of uncoupledness we can now impose a sufficient condition for the uniqueness of a minimal decomposition up to permutations and scalings:

\begin{theorem}[Uniqueness of uncoupled decompositions] \label{thm: Uniqueness of uncoupled decomposition}
Let $k$ be a separable matrix-valued kernel with uncoupled decomposition $(k_i,Q_i)_{i=1}^p$. If $p$ is minimal, then the decomposition is unique,
up to permutations and scalings.
\end{theorem}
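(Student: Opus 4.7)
The plan is to leverage the uncoupled property on both sides to pin down the column-space structure of each $\hat{Q}_j$ in terms of the $R(Q_i)$, and then use a simple cardinality argument to force a one-to-one correspondence.

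First I would extract from the minimality of both decompositions (via Lemma~\ref{lem: Sufficient and necessary minimality condition} and the vectorization argument in its proof) that both decompositions have length $p$, that $\{k_i\}$, $\{\hat k_j\}$, $\{Q_i\}$, $\{\hat Q_j\}$ are linearly independent, and that there is a change-of-basis relation
\begin{equation*}
  \hat Q_j = \sum_{i=1}^p d_{ij} Q_i, \qquad j=1,\dots,p,
\end{equation*}
with $D=[d_{ij}]\in\R^{p\times p}$ invertible.

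The heart of the argument is to compute $R(\hat Q_j)$. Let $S_j := \{i : d_{ij}\neq 0\}$. Because the original decomposition is uncoupled, Lemma~\ref{lem: Sum of Ranges} gives $\sum_i R(Q_i)=\bigoplus_i R(Q_i)$, so from $\sum_i d_{ij} Q_i\alpha = 0$ and $d_{ij}Q_i\alpha\in R(Q_i)$ one concludes $d_{ij}Q_i\alpha=0$ for every $i$; therefore
\begin{equation*}
  \ker\hat Q_j \;=\; \bigcap_{i\in S_j}\ker Q_i.
\end{equation*}
Since the $Q_i$ and $\hat Q_j$ are symmetric, $\ker M = R(M)^\perp$, and taking orthogonal complements of the above identity yields
\begin{equation*}
  R(\hat Q_j) \;=\; \bigoplus_{i\in S_j} R(Q_i).
\end{equation*}
Deriving this clean expression for the range is the key step, and also the one I expect to be the main obstacle; once it is in place, the rest is combinatorial bookkeeping.

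Now I would invoke uncoupledness of the second decomposition: $\bigoplus_j R(\hat Q_j)=\bigoplus_i R(Q_i)$. Substituting $R(\hat Q_j)=\bigoplus_{i\in S_j}R(Q_i)$ and using that the $R(Q_i)$ are in direct sum, the family $\{S_j\}_{j=1}^p$ must be a partition of $\{1,\dots,p\}$: each $i$ occurs in exactly one $S_j$, otherwise some $R(Q_i)$ would sit in two different $R(\hat Q_j)$'s, contradicting directness. Since $D$ is invertible each column is nonzero, so $|S_j|\ge 1$; since $\sum_j|S_j|=p$ and there are $p$ parts, we conclude $|S_j|=1$ for every~$j$.

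Writing $S_j=\{\pi(j)\}$ and $\lambda_j := d_{\pi(j),j}$, this already gives $\hat Q_j = \lambda_j Q_{\pi(j)}$, and linear independence of $\{\hat Q_j\}$ forces $\pi$ to be a permutation of $\{1,\dots,p\}$. To close the loop for the scalar factors, I would substitute $\hat Q_j=\lambda_j Q_{\pi(j)}$ into $\sum_j \hat k_j \hat Q_j=\sum_i k_i Q_i$, reindex over $i=\pi(j)$, and use linear independence of $\{Q_i\}$ to obtain $\hat k_j = k_{\pi(j)}/\lambda_j$; hence $\hat k_j \hat Q_j = k_{\pi(j)} Q_{\pi(j)}$, which is exactly uniqueness up to permutations and scalings.
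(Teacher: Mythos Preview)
Your proposal is correct and follows essentially the same route as the paper: obtain the change-of-basis $\hat Q_j=\sum_i d_{ij}Q_i$, use uncoupledness of the first decomposition to identify $R(\hat Q_j)=\bigoplus_{i\in S_j}R(Q_i)$, and then use uncoupledness of the second decomposition to force the transition matrix to be a scaled permutation. Your kernel/orthogonal-complement derivation of the range identity is somewhat more explicit than the paper's (which simply asserts $R(\hat Q_j)=\bigoplus_i a_{i,j}R(Q_i)$), and your partition/cardinality phrasing of the combinatorial step is cleaner, but the overall structure is identical.
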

\begin{proof}
Since the decomposition is uncoupled we have $R(Q_1) \cap R(Q_2 + \dots + Q_p) = \set{0}$ and
$R(Q_1 + Q_2 + \dots + Q_p) = R(Q_1) + R(Q_2 + \dots + Q_p)$. Therefore, there exists a $c \in \R^{m}$ such that $Q_1c \neq 0$ and $Q_2c, \dots, Q_p c = 0$. We get
\begin{align*}
k_1 \underbrace{Q_1c}_{\neq 0} & = k_1 Q_1c + k_2Q_2c + \dots + k_pQ_pc \\
& = \hat{k}_1 \hat{Q}_1c + \dots \hat{k}_p \hat{Q}_pc.
\end{align*}
Thus, $k_1$ can be written as a linear combination of $\hat{k}_1,\dots,\hat{k}_p$ such that
\begin{equation*}
k_1 = \sum\limits_{i=1}^p \hat{k}_i a_{1i}.
\end{equation*}
Similarly, the same holds for $k_2,\dots,k_p$ and therefore there exists a matrix $A = (a_{ij})_{i,j=1}^p$ such that
\begin{equation*}
\begin{pmatrix}
k_1 \\ \vdots \\ k_p
\end{pmatrix}
= A \begin{pmatrix}
\hat{k}_1 \\ \vdots \\ \hat{k}_p
\end{pmatrix}.
\end{equation*}
Furthermore, it holds for $i=1,\dots,p$:
\begin{equation*}
 \hat{Q}_j = \sum\limits_{i=1}^p a_{i,j}Q_i  
\end{equation*}
and therefore 
\begin{equation*}
 R(\hat{Q}_j) = \bigoplus\limits_{i=1}^p a_{i,j} R(Q_i).
\end{equation*}
Since $(\hat{k}_j,\hat{Q}_j)_{j=1}^p$ is uncoupled it holds for $j \neq j^{\prime}$:
\begin{equation*}
 R(\hat{Q}_j) \cap R(\hat{Q}_{j^{\prime}}) = \set{0},
\end{equation*}
from which we conclude that $a_{i,j}$ or $a_{i,j^{\prime}}$ is equal to $0$. Thus, for every $i$ there is exactly one $j = j(i)$ such that $a_{i,j(i)} \neq 0$
and the mapping $i \mapsto j(i)$ is bijective and it holds
\begin{equation*}
 k_i = \sum\limits_{j=1}^p a_{i,j}\hat{k}_j = a_{i,j(i)} \hat{k}_{j(i)}.
\end{equation*}
Since both $(k_i,Q_i)_{i=1}^p$ and $(\hat{k}_j,\hat{Q}_j)_{j=1}^p$ are decompositions of $k$ we get
\begin{align*}
 0 = k - k & = \sum\limits_{i=1}^p k_iQ_i - \sum\limits_{j=1}^p \hat{k}_j \hat{Q}_j \\
 & = \sum\limits_{i=1}^p k_iQ_i - \sum\limits_{i=1}^p \hat{k}_{j(i)} \hat{Q}_{j(i)} \\
 & = \sum\limits_{i=1}^p \hat{k}_{j(i)}( a_{i,j(i)} Q_i - \hat{Q}_{j(i)}).
\end{align*}
Since the kernels $k_1,\dots,k_p$ and $\hat{k}_1,\dots,\hat{k}_p$ are linearly independent by Lemma \ref{lem: Sufficient and necessary minimality condition}, respectively,
we conclude that
\begin{equation*}
 a_{i,j(i)}Q_i = \hat{Q}_{j(i)}
\end{equation*}
which results in
\begin{equation*}
 k_iQ_i = \hat{k}_{j(i)} a_{i,j(i)}Q_i = \hat{k}_{j(i)}\hat{Q}_{j(i)}.
\end{equation*}
\end{proof}
In general the existence of an uncoupled or even minimal uncoupled decomposition cannot be guaranteed, as \eqref{def:eq: Uncoupled condition} necessitates that the length of
any uncoupled decomposition is at most $m$. Therefore, any separable kernel of order $m+1$ possesses no uncoupled decomposition. In the following we want to present a sufficient
criterion for the existence of a minimal uncoupled decomposition. This is motivated by trying to extend the well known fact for scalar-valued kernels that the product of two
positive definite kernels is again a positive definite kernel, see \cite{Scholkopf2002a}. This result does not extend to the matrix-valued case, since the kernels additionally have to commute
for every pair of input parameters, i.e.\ for $k_1,k_2 : \Omega \times \Omega \rightarrow \R^{m \times m}$ it must hold
\begin{equation}
 k_1(x,y)k_2(x,y) = k_2(x,y)k_1(x,y), \quad \forall \,  (x,y) \in \Omega \times \Omega \label{eq: kernels commute}
\end{equation}
to have that $k := k_1 \cdot k_2$ is a matrix-valued kernel. 
However, even if \eqref{eq: kernels commute} is satisfied and both $k_1,k_2$ are positive definite the kernel $k$ can be indefinite, as the following example shows:

\begin{example}
 Let $k_1,k_2 : \R \times \R \rightarrow \R$ be given by
 \begin{equation*}
  k_1(x,y) := e^{ -\frac{1}{10}(x-y)^2} \quad \text{ and } \quad k_2(x,y) := e^{-(x-y)^2}
 \end{equation*}
and let $Q_1,Q_2 \in \R^{2 \times 2}$ be the symmetric matrices
\begin{equation*}
 Q_1 = \begin{pmatrix}
        1 & 1 \\ 1 & 1
       \end{pmatrix}
\quad \text{ and } \quad
 Q_2 = \begin{pmatrix}
       0 & 0 \\ 0 & 1
      \end{pmatrix}.
\end{equation*}
Furthermore, let $k : \Omega \times \Omega \rightarrow \R^{2 \times 2}$ denote the matrix-valued kernel with decomposition $(k_i,Q_i)_{i=1}^2$ and $X = \set{0,1}$. 
By Lemma \ref{lem: Separable kernel is s.p.d} $k$ is a positive definite kernel, but $k^2$ is not, as
\begin{equation*}
 k^2(X,X) = \begin{pmatrix}
                     5 & 3 & 2e^{-\frac{1}{5}} + 2e^{-\frac{11}{10}} +e^{-2} & 2e^{-\frac{1}{5}} + e^{-\frac{11}{10}} \\
                     3 & 2 & 2e^{-\frac{1}{5}} + e^{-\frac{11}{10}} & 2e^{-\frac{1}{5}} \\
                     2e^{-\frac{1}{5}} + 2e^{-\frac{11}{10}} +e^{-2} & 2e^{-\frac{1}{5}} + e^{-\frac{11}{10}} & 5 & 3 \\
                     2e^{-\frac{1}{5}} + e^{-\frac{11}{10}} & 2e^{-\frac{1}{5}} & 3 & 2 \\
                    \end{pmatrix}
 \end{equation*}
 has a negative eigenvalue $\lambda \approx -0.044$.
\end{example}
Taking a closer look, the matrix $k^2(X,X)$ can be written as a block-Hadamard product 
\begin{equation*}
 k^2(X,X) = k(X,X) \, \square \, k(X,X) := (k(x_i,x_j)k(x_i,x_j))_{i,j}.
\end{equation*}
As it was shown in \cite{GK2012}, the block-Hadamard product of two positive (semi-)definite block matrices $A = (A_{ij})_{i,j}$, $B = (B_{ij})_{i,j}$ is positive (semi-)definite
if each block of $A$ commutes with each block of $B$. If this restriction is applied to every possible Gramian matrix of a matrix-valued kernel, this leads to the condition
\begin{equation*}
 k(x,y)k(\tilde{x},\tilde{y}) =  k(\tilde{x},\tilde{y})k(x,y), \quad \forall \, x,y,\tilde{x},\tilde{y} \in \Omega.
\end{equation*}
In this case, the kernel $k$ can be characterized as follows:

\begin{theorem} \label{thm: characterization of orthogonal kernel}
 Let $k : \Omega \times \Omega \rightarrow \R^{m \times m}$ be matrix-valued kernel such that $k(x,y) = k(y,x)$ for all $x,y \in \Omega$. Then the following statements are equivalent
 \begin{itemize}
  \item[i)] $k(x,y)k(\tilde{x},\tilde{y}) = k(\tilde{x},\tilde{y})k(x,y)$ for all $x,\tilde{x},y,\tilde{y} \in \Omega$
  \item[ii)] There exists an orthogonal matrix $P \in \R^{m \times m}$ such that $P^Tk(x,y)P$ is diagonal for all $x,y \in \Omega$.
  \item[iii)] $k$ is separable and there exists an uncoupled decomposition $(k_i,Q_i)_{i=1}^p$ with length $p \leq m$ and for which $Q_iQ_j = 0$ for $i \neq j$.
 \end{itemize}
\end{theorem}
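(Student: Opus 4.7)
The plan is to prove the three equivalences via the cycle $i) \Rightarrow ii) \Rightarrow iii) \Rightarrow i)$. A preliminary observation I would record first: combining the identity $k(x,y) = k(y,x)^T$ from Definition \ref{def: Matrix-valued kernel} with the added hypothesis $k(x,y) = k(y,x)$ forces $k(x,y) = k(x,y)^T$, so every matrix $k(x,y)$ is symmetric.

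For $i) \Rightarrow ii)$, I would invoke the classical theorem that a family of pairwise commuting symmetric real matrices is simultaneously diagonalizable by a common orthogonal matrix. The subtle point is that the family $\set{k(x,y) : x,y \in \Omega}$ may be infinite, so the textbook finite version does not apply directly. The fix is to exploit finite-dimensionality of $\R^{m \times m}$: pick a finite basis $k(x_1,y_1),\dots,k(x_N,y_N)$ of $\myspan\set{k(x,y) : x,y \in \Omega}$, simultaneously diagonalize these finitely many commuting symmetric matrices by an orthogonal $P$, and observe that $P^T k(x,y) P$ is diagonal for every $(x,y)$, since every $k(x,y)$ is a linear combination of the basis elements and linear combinations of diagonal matrices remain diagonal.

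For $ii) \Rightarrow iii)$, let $q_1,\dots,q_m$ denote the columns of $P$ and set $d_i(x,y) := q_i^T k(x,y) q_i$, so the diagonalization rewrites as $k(x,y) = \sum_{i=1}^m d_i(x,y)\, q_i q_i^T$. Define $Q_i := q_i q_i^T$. Each $d_i$ is scalar-valued and symmetric in $(x,y)$ (hence a scalar kernel), each $Q_i$ is symmetric, and orthonormality of the $q_i$ yields $Q_i Q_j = (q_i^T q_j)\, q_i q_j^T = \delta_{ij} Q_i$, which in particular gives $Q_i Q_j = 0$ for $i \neq j$. Dropping the indices where $d_i \equiv 0$ produces a decomposition of length $p \leq m$. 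For uncoupledness, the surviving $Q_i$ are pairwise orthogonal rank-one matrices whose ranges are spanned by a linearly independent set of $q_i$'s, so $\rank(\sum_{i=1}^p Q_i) = p = \sum_{i=1}^p \rank(Q_i)$.

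For $iii) \Rightarrow i)$, the property $Q_i Q_j = 0$ for $i \neq j$ collapses the double sum in the matrix product to a diagonal one:
\begin{equation*}
k(x,y) k(\tilde{x},\tilde{y}) = \sum_{i,j=1}^p k_i(x,y) k_j(\tilde{x},\tilde{y}) Q_i Q_j = \sum_{i=1}^p k_i(x,y) k_i(\tilde{x},\tilde{y}) Q_i^2,
\end{equation*}
and the analogous computation for $k(\tilde{x},\tilde{y}) k(x,y)$ yields the same expression since the scalar factors commute. The main obstacle I anticipate is handling the simultaneous diagonalization step rigorously in the presence of a possibly infinite family, but the reduction to a finite spanning basis within the finite-dimensional space $\R^{m\times m}$ defuses this; the remaining two implications amount to explicit algebraic bookkeeping.
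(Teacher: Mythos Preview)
Your argument is correct and follows essentially the same cycle $i)\Rightarrow ii)\Rightarrow iii)\Rightarrow i)$ as the paper, including the reduction to a finite basis of $\myspan\{k(x,y)\}$ for the simultaneous diagonalization and the same algebraic collapse of the double sum for $iii)\Rightarrow i)$. The only notable difference is in $ii)\Rightarrow iii)$: you take the rank-one projectors $Q_i=q_iq_i^T$ directly (dropping only those with $d_i\equiv 0$), whereas the paper additionally groups together diagonal entries that are scalar multiples of one another into larger $Q_{i_l}$; since the statement does not require minimality of $p$, your simpler construction already suffices.
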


\begin{proof}
 ``$i) \Rightarrow ii)$'' Let $A_1,\dots,A_d$ denote a basis of $\myspan\set{ k(x,y) | x,y \in \Omega}$.
 Then the $A_i$ are symmetric, commute with one other and therefore are simultaneously diagonalizable, i.e.\,
 there exists an orthogonal matrix $P$ such that $P^TA_iP$ is diagonal for $i = 1,\dots,m$. It follows,
 that $k(x,y) \in \myspan\set{ A_1,\dots A_d}$ is diagonalizable for any $x,y \in \Omega$\\
 ``$ii) \Rightarrow iii)$'' By assumption it holds
 \begin{equation*}
  P^Tk(x,y)P = \diag( k_1(x,y), \dots, k_d(x,y))
 \end{equation*}
 and $k_i : \Omega \times \Omega \rightarrow \R$, $i=1,\dots,d$ are scalar-valued kernels.
 For $i = 1,\dots,d$ let $J(i) := \set{ j : k_i = \alpha_{i,j} k_j \text{ for some } \alpha_{i,j} \in \R}$. Then there exist $i_1,\dots,i_p$ with minimal $p$ such that 
 \begin{equation*}
  \bigcup\limits_{l=1}^p J(i_l) = \set{1,\dots,d} \quad \text{ and } \quad J(i) \cap J(i^{\prime}) = \emptyset \text{ for } i \neq i^{\prime}.
 \end{equation*}
 It holds
 \begin{align*}
  k &= \sum\limits_{i=1}^m k_i (Pe_i)(Pe_i)^T = \sum\limits_{l=1}^m k_{i_l} \underbrace{\sum\limits_{j \in J(i_l)} \alpha_{i_l,j} (Pe_j)(Pe_j)^T}_{=: Q_{i_l}} = \sum\limits_{l=1}^m k_{i_l} Q_{i_l}.
 \end{align*}
 Furthermore,
 \begin{equation*}
  Q_iQ_{i^{\prime}} = \sum\limits_{j \in J(i)} \sum\limits_{j^{\prime} \in J(i^{\prime})} \alpha_{i,j}\alpha_{i^{\prime},j^{\prime}} (Pe_j)\underbrace{(Pe_j)^T(Pe_{j^{\prime}})}_{= 0}(Pe_{j^{\prime}})^T = 0
 \end{equation*}

``$iii) \Rightarrow i)$'' It holds
\begin{align*}
 k(x,y)k(\tilde{x},\tilde{y}) & = \left(\sum\limits_{i=1}^p k_i(x,y)Q_i \right)\left( \sum\limits_{j=1}^p k_j(\tilde{x},\tilde{y}) Q_j \right)\\
 & = \sum\limits_{i=1}^p \sum\limits_{j=1}^p k_i(x,y)k_j(\tilde{x},\tilde{y})Q_iQ_j \\
 & = \sum\limits_{i=1}^p k_i(x,y)k_i(\tilde{x},\tilde{y}) Q_i^2 \\
 & = \sum\limits_{i=1}^p k_i(\tilde{x},\tilde{y})k_i(x,y) Q_i^2 \\
 & = \sum\limits_{i=1}^p \sum\limits_{j=1}^p k_j(\tilde{x},\tilde{y})k_i(x,y)Q_jQ_i \\
 & = \left( \sum\limits_{j=1}^p k_j(\tilde{x},\tilde{y}) Q_j \right)\left(\sum\limits_{i=1}^p k_i(x,y)Q_i \right)\\
 & = k(\tilde{x},\tilde{y})k(x,y).
\end{align*}
\end{proof}
We conclude this subsection with a direct corollary:

\begin{corollary}
 Let $k : \Omega \times \Omega \rightarrow \R^{m \times m}$ be a positive definite matrix-valued kernel that satisfies $k(x,y) = k(y,x)$ for all $x,y \in \Omega$. If one of the conditions in Theorem
 \ref{thm: characterization of orthogonal kernel} is met, then $k^n$ is a positive definite matrix-valued kernel for any $n \in \N_0$.
\end{corollary}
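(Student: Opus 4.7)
The plan is to invoke condition ii) of Theorem~\ref{thm: characterization of orthogonal kernel}, since simultaneous diagonalization reduces everything to the scalar-valued case. Let $P \in \R^{m \times m}$ be orthogonal with
\[
P^T k(x,y) P = \diag(k_1(x,y), \dots, k_m(x,y)), \quad \forall\, x,y \in \Omega,
\]
for some scalar-valued functions $k_1, \dots, k_m$. Because conjugation by $P$ commutes with taking matrix powers, one immediately obtains
\[
P^T k(x,y)^n P = \diag(k_1(x,y)^n, \dots, k_m(x,y)^n),
\]
and the identity $k^n(x,y) = k^n(y,x)^T$ is then inherited from the symmetry of the scalar-valued entries, so $k^n$ is a matrix-valued kernel in the sense of Definition~\ref{def: Matrix-valued kernel}.

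The first real step is to verify that each scalar $k_l$ is itself a positive definite kernel. For any finite $X = \set{x_1, \dots, x_N} \subset \Omega$, the Gramian $k(X,X)$ is orthogonally congruent, via $I_N \otimes P$, to the block matrix whose $(i,j)$-block equals $\diag(k_l(x_i,x_j))_{l=1}^m$. A symmetric permutation of rows and columns then groups entries by component $l$, converting this into the block-diagonal matrix $\diag(k_1(X,X), \dots, k_m(X,X))$, which is therefore positive semi-definite. Reading off the diagonal blocks forces $k_l(X,X) \succeq 0$ for every $l$, so each $k_l$ is a positive definite scalar kernel.

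Next I would use the elementary fact from scalar theory that the pointwise product of positive definite scalar kernels is positive definite, so $k_l^n$ is a positive definite scalar kernel for every $l$ and every $n \in \N_0$ (with $k_l^0 \equiv 1$). Running the same congruence and permutation in reverse, $k^n(X,X)$ is orthogonally similar to $\diag(k_1^n(X,X), \dots, k_m^n(X,X))$, which is positive semi-definite. Since $X$ was arbitrary, $k^n$ is positive definite.

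The only real obstacle is the block-matrix bookkeeping needed to pass between the $Nm \times Nm$ block Gramian $k^n(X,X)$ and the direct sum of the $N \times N$ scalar Gramians $k_l^n(X,X)$; once the factorization through $I_N \otimes P$ and the grouping permutation are written down explicitly, the result reduces to the observation that matrix powers respect simultaneous diagonalization.
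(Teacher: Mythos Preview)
Your argument is correct and essentially parallel to the paper's, but you route through condition~ii) of Theorem~\ref{thm: characterization of orthogonal kernel} whereas the paper invokes condition~iii). Concretely, the paper takes the uncoupled decomposition $k = \sum_{l=1}^p k_l Q_l$ with $Q_l Q_{l'} = 0$ for $l \neq l'$, so that $k(x,y)^n = \sum_l k_l(x,y)^n Q_l^n$ and hence $k^n(X,X) = \sum_l k_l^n(X,X) \otimes Q_l^n \succeq 0$, each summand being a Kronecker product of a positive semi-definite matrix with a positive semi-definite matrix. Your approach is arguably more self-contained: by working with the simultaneous diagonalization you make the positive-definiteness of the scalar components $k_l$ explicit via the congruence $(I_N \otimes P)^T k(X,X)(I_N \otimes P)$ followed by the shuffle permutation, rather than appealing to the implicit claim in the paper's proof that the $k_l$ in the decomposition are positive definite and the $Q_l$ are positive semi-definite (facts which are true but not stated in Theorem~\ref{thm: characterization of orthogonal kernel}~iii) itself, only recoverable from its proof). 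The paper's version is shorter and stays within its Kronecker-product formalism; yours avoids the separable-kernel machinery entirely and reduces the matrix-valued statement directly to the classical scalar Schur-product fact.
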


\begin{proof}
 By Theorem \ref{thm: characterization of orthogonal kernel} $k$ can be decomposed as
 \begin{equation*}
  k(x,y) = \sum\limits_{l=1}^p k_lQ_l
 \end{equation*}
 with positive-definite scalar-valued kernels $k_l$ and positive semi-definite matrices $Q_l$ satisfying $Q_lQ_{l^{\prime}} = 0$ for $l \neq l^{\prime}$. Therefore,
 for any set $X = \set{x_1,\dots,x_n}$ of p.w.\ distinct points
 \begin{equation*}
  k^{n}(X,X) = \sum\limits_{l=1}^p \underbrace{\underbrace{k_l^{n}(X,X)}_{\succ 0} \kron \underbrace{Q_l}_{\succeq 0}}_{\succeq 0} \succeq 0.
 \end{equation*}

\end{proof}

\subsection{RKHS for separable kernels}

As we want to consider approximations in the RKHS of separable kernels, we will show how the RKHS of the matrix-valued kernel $k$ relates to the RKHS of the scalar-valued
kernels $k_i$ and matrices $Q_i$ which form a decomposition of $k$. We start with decompositions of order $1$:

\begin{lemma}[RKHS of separable kernels of order $1$] \label{lem: RKHS separable kernel order 1}
 Let $k_s$ be a scalar-valued p.d. kernel and $Q \in \R^{m \times m}$ a positive semi-definite matrix.
 Then $k := k_s\cdot Q$ is a p.d. matrix-valued kernel and it holds
 \begin{equation}
  \cH_k = \cH_{k_s}e_1 \oplus \dots \oplus \cH_{k_s}e_p. \label{eq: RKHS for separable kernel of order 1}
 \end{equation}
 Here $\set{ e_i}_{i=1}^p$ denotes a basis of the range of $Q$.
\end{lemma}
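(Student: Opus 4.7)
My plan is to first dispatch the easy part (that $k$ is a p.d.\ matrix-valued kernel), and then construct a Hilbert space structure on the right-hand side of \eqref{eq: RKHS for separable kernel of order 1} that makes $k$ reproducing; by the one-to-one correspondence between p.d.\ matrix-valued kernels and RKHS already stated in the paper, this will identify $\cN$ with $\cH_k$.

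For the first part, symmetry $k(x,y) = k(y,x)^T$ is immediate from $Q = Q^T$ and $k_s(x,y) = k_s(y,x)$, and for any finite $X \subset \Omega$ the Gramian factors as $k(X,X) = k_s(X,X)\kron Q$, which is positive semi-definite as the Kronecker product of two positive semi-definite matrices. The key tool for the second part is a useful factorization of $Q$. Assemble $E := [e_1,\dots,e_p] \in \R^{m\times p}$; since $\{e_i\}_{i=1}^p$ is a basis of $\range(Q)$ and $Q$ is symmetric, standard linear algebra yields a unique symmetric positive definite $\tilde Q \in \R^{p\times p}$ with $Q = E\tilde Q E^T$.

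Now I would define $\cN$ as the set of $\R^m$-valued functions $f = \sum_{i=1}^p f_i e_i$ with $f_i \in \cH_{k_s}$ (this representation being unique by linear independence of the $e_i$), equipped with the inner product
\begin{equation*}
\scalprod{f}{g}_{\cN} := \sum_{i,j=1}^p (\tilde Q^{-1})_{ij} \scalprod{f_i}{g_j}_{\cH_{k_s}}.
\end{equation*}
Since $\tilde Q^{-1}$ is symmetric positive definite, this is an inner product, and $\cN$ is complete as it is isomorphic to $\cH_{k_s}^p$ with an equivalent inner product. I then verify $k$ is its reproducing kernel: for any $x\in\Omega$ and $\alpha \in \R^m$,
\begin{equation*}
k(\cdot,x)\alpha = k_s(\cdot,x)\, E\tilde Q E^T\alpha = \sum_{i=1}^p (\tilde Q E^T\alpha)_i\, k_s(\cdot,x) e_i \in \cN,
\end{equation*}
and for $f = \sum_i f_i e_i \in \cN$ a direct computation using the reproducing property of $k_s$ on $\cH_{k_s}$ gives
\begin{equation*}
\scalprod{f}{k(\cdot,x)\alpha}_{\cN}
= \sum_{i,j}(\tilde Q^{-1})_{ij}(\tilde Q E^T\alpha)_j f_i(x)
= \sum_i (E^T\alpha)_i f_i(x) = f(x)^T\alpha.
\end{equation*}
Uniqueness of the RKHS then forces $\cN = \cH_k$ as Hilbert spaces of functions.

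The only genuinely delicate step is picking the right inner product on $\cN$: the factor $\tilde Q^{-1}$ is not optional, it is exactly what cancels the $\tilde Q$ appearing when $Q\alpha$ is expanded in the basis $\{e_i\}$. Everything else is bookkeeping, and in the special case where $\{e_i\}$ is taken as an orthonormal eigenbasis of $Q$ restricted to $\range(Q)$ with eigenvalues $\lambda_i > 0$, the formula reduces cleanly to $\scalprod{f}{g}_{\cN} = \sum_i \lambda_i^{-1}\scalprod{f_i}{g_i}_{\cH_{k_s}}$, which is the form one would intuitively expect.
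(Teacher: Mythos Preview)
Your argument is correct and takes a genuinely different route from the paper. The paper establishes the equality of function spaces in three steps: it first shows the sum on the right is direct via the linear independence of the $e_i$; then it argues $\cH_{k_s}e_i \subset \cH_k$ by writing an element of $\cH_{k_s}e_i$ as a (limit of) kernel combinations $k(\cdot,x_n)\alpha_n v_i$ where $Qv_i = e_i$; finally it shows equality by proving that any $f \in \cH_k$ orthogonal to all $\cH_{k_s}e_i$ must vanish, using the reproducing property. In particular, the paper never writes down the $\cH_k$--inner product explicitly.

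Your approach is constructive instead: you factor $Q = E\tilde Q E^T$, equip the candidate space $\cN$ with the $\tilde Q^{-1}$--weighted inner product, verify directly that $k$ is reproducing there, and invoke the uniqueness of the RKHS. This is cleaner in that it avoids the slightly informal ``infinite sum'' representation in the paper's inclusion step, and it delivers strictly more: an explicit formula for $\scalprod{\cdot}{\cdot}_{\cH_k}$ in terms of $\scalprod{\cdot}{\cdot}_{\cH_{k_s}}$, which the paper's proof does not provide. The cost is the small linear-algebra lemma about the existence of a positive definite $\tilde Q$ with $Q = E\tilde Q E^T$; this is routine (e.g.\ $\tilde Q = E^{+}Q(E^{+})^T$ works), but you might state it as such rather than asserting ``standard linear algebra yields''.
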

\begin{proof}
 We first show that the sum is direct. Let $f_i \in \cH_{k_s}e_i$, $i = 1,..,p$. Assume that
 \begin{equation*}
  f_1 + \dots + f_p = 0
 \end{equation*}
and there is at least one $j \in \set{1,\dots,p}$ such that $f_j \neq 0$. It follows
\begin{equation*}
 \myspan\set{e_j} \ni f_j(x) = - \sum\limits_{i=1,i\neq j}^p f_i(x) \in \myspan\set{e_1,\dots,e_{j-1},e_{j+1},\dots,e_{p} }, \quad \forall \, x \in \Omega
\end{equation*}
and therefore
\begin{equation*}
 f_j(x) \in \myspan\set{e_j} \cap \myspan\set{e_1,\dots,e_{j-1},e_{j+1},\dots,e_{p} } = \set{0}, \quad \forall \, x \in \Omega,
\end{equation*}
i.e.\ $f_j = 0$. Iteratively we get $f_i = 0$ for $i = 1,\dots,p$ and the sum is direct. We now show that the right hand side of \eqref{eq: RKHS for separable kernel of order 1}
is a subspace of the left hand side. Therefore, let $f_i \in \cH_{k_s}e_i$, $i = 1,..,p$. Then there exist sequences $(\alpha^{(i)}_n)_{n \in \N} \subset \R$
and $(x^{(i)}_n)_{n \in \N} \subset \Omega$ such that
\begin{align*}
 f_i & = \left(\sum\limits_{n=1}^{\infty} k_s(\cdot,x^{(i)}_n)\alpha^{(i)}_n\right)e_i \\
     & = \sum\limits_{n=1}^{\infty} k(\cdot,x^{(i)}_n) \alpha^{(i)}_nv_i,
\end{align*}
where $v_i \in \R^m$ satisfies $Qv_i = e_i$. We conclude that $f_i \in \cH_{k}$ for $i = 1,\dots,p$ and thus 
\begin{equation*}
 f_1 + \dots + f_p \in \cH_{k}.
\end{equation*}
Assume that $\cH_{k} \neq \cH_{k_s}e_1 \oplus \dots \oplus \cH_{k_s}e_p.$ Then for any 
$f \in \left(\cH_{k_s}e_1 \oplus \dots \oplus \cH_{k_s}e_p.\right)^{\perp}$ it holds
\begin{equation*}
 \Hscalprod{f}{k_s(\cdot,x)e_i} = 0 \quad \forall \, x \in \Omega, i = 1,\dots,p.
\end{equation*}
Due to the linearity of the inner products it also holds
\begin{equation*}
 f(x)^T\alpha = \Hscalprod{f}{k(\cdot,x)\alpha} = 0 \quad \forall \, x \in \Omega
\end{equation*}
and thus $f = 0$.
\end{proof}
\begin{remark}
 In the special case of $Q = I_m$, which where for example considered in \cite{HS2017a,Wirtz2013} this leads to the RKHS $\cH_{k} = \bigotimes\limits_{i=1}^m \cH_{k_s}$ with the inner product given by
 \begin{equation*}
  \langle f , g \rangle_{\cH_k} = \langle (f_1,\dots,f_m), (g_1,\dots,g_m) \rangle = \sum\limits_{i=1}^m \langle f_i,g_i \rangle_{\cH_{k_s}}.
 \end{equation*}
\end{remark}

We have seen, c.f.\ Corollary \ref{cor: Bound on the interpolation error} that the power-function is a valuable tool to provide error estimators to the pointwise error
between a function $f$ in $\cH$ and its interpolant in a subspace $\cN$. For scalar-valued kernels bounds on the decay of the power-functions are known for a wide variety of kernels,
see \cite{Wendland2005} for more details. We want to make use of these bounds, to derive similar bound for the matrix-valued case. Again, we restrict ourself to the separable
kernels of order $1$ at first:

\begin{lemma}[power-function of separable kernels of order $1$] \label{cor: Power Function of separable kernel of order 1}
 Let $k: \Omega \times \Omega \rightarrow \R^{m \times m}$ be a separable kernel of order $1$ with decomposition $(k_1,Q_1)$, 
 where $k_1$ is a p.d. kernel and $Q_1$ is positive semi-definite.
 Let $X_n = \set{ x_1,\dots,x_n} \subset \Omega$ be a set of pairwise distinct points. Furthermore, let $\cN := \cN_{k}(X_n)$,
 $\hat{\cN} := \myspan\set{k_1(\cdot,x_j) | \, x_j \in X_n}$ and let $\cP_{\hat{\cN}}$ denote the power function of the 
 scalar-valued kernel $k_1$. Then it holds
 \begin{equation}
  \left(\cP_{\cN}^{\alpha}(x)\right)^2 = \cP_{\hat{\cN}}(x)^2 \alpha^TQ_1\alpha. \label{eq: Power Function of separable kernel of order 1}
 \end{equation}
\end{lemma}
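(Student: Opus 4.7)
The plan is to reduce the computation of $\cP_{\cN}^{\alpha}(x)^2$ to scalar-valued quantities by exploiting the Kronecker-product structure inherited from the order-$1$ decomposition.

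First I would apply Corollary \ref{cor: Power Function} to write
\begin{equation*}
\cP_{\cN}^{\alpha}(x)^2 = \alpha^T\bigl(k(x,x) - k_{\cN}(x,x)\bigr)\alpha,
\end{equation*}
so that the task reduces to computing $k_{\cN}(x,x)$, which by Corollary \ref{cor: Reproducing kernel of NX} equals $k(x,X)\,k(X,X)^{+}\,k(X,x)$.

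Next I would use the decomposition $k = k_1 Q_1$ to observe that all three blocks carry a Kronecker-product form:
\begin{equation*}
k(X,X) = k_1(X,X)\kron Q_1,\quad k(x,X) = k_1(x,X)\kron Q_1,\quad k(X,x) = k_1(X,x)\kron Q_1.
\end{equation*}
Since the Moore--Penrose pseudoinverse respects Kronecker products, $k(X,X)^{+} = k_1(X,X)^{+}\kron Q_1^{+}$, and the mixed-product property $(A\kron B)(C\kron D) = AC \kron BD$ gives
\begin{equation*}
k_{\cN}(x,x) = \bigl(k_1(x,X)\,k_1(X,X)^{+}\,k_1(X,x)\bigr)\kron\bigl(Q_1 Q_1^{+} Q_1\bigr).
\end{equation*}
Using the defining property $Q_1 Q_1^{+} Q_1 = Q_1$ of the pseudoinverse and recognizing the scalar-valued counterpart of Corollary \ref{cor: Reproducing kernel of NX} (applied to the p.d.\ kernel $k_1$ on $\hat{\cN}$) as $k_{1,\hat{\cN}}(x,x) = k_1(x,X)\,k_1(X,X)^{+}\,k_1(X,x)$, this simplifies to $k_{\cN}(x,x) = k_{1,\hat{\cN}}(x,x)\,Q_1$.

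Finally I would combine these to get $k(x,x) - k_{\cN}(x,x) = \bigl(k_1(x,x) - k_{1,\hat{\cN}}(x,x)\bigr)\,Q_1 = \cP_{\hat{\cN}}(x)^2\,Q_1$, where the last identity is the scalar-valued analog of Corollary \ref{cor: Power Function}. Substituting into the expression for $\cP_{\cN}^{\alpha}(x)^2$ and pulling out the scalar factor yields \eqref{eq: Power Function of separable kernel of order 1}. The one mildly delicate point is the handling of the pseudoinverse when $Q_1$ is only positive semi-definite (not invertible): one must be sure that $(A\kron B)^{+} = A^{+}\kron B^{+}$ applies in this generality and that $Q_1 Q_1^{+} Q_1 = Q_1$ is used rather than $Q_1 Q_1^{+} = I$. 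Aside from that, every step is a direct manipulation via the earlier corollaries and standard Kronecker identities.
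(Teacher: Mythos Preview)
Your proposal is correct and follows essentially the same route as the paper: both reduce to showing $k_{\cN}(x,y)=k_{1,\hat{\cN}}(x,y)\,Q_1$ via Corollary~\ref{cor: Reproducing kernel of NX}, exploit the Kronecker-product structure $k(X,X)=k_1(X,X)\kron Q_1$ together with $(A\kron B)^{+}=A^{+}\kron B^{+}$ and $Q_1Q_1^{+}Q_1=Q_1$, and then invoke Corollary~\ref{cor: Power Function}. Your explicit remark on the delicacy of the pseudoinverse identity when $Q_1$ is only positive semi-definite is a welcome addition that the paper leaves implicit.
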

\begin{proof}
 Since $k = k_1Q_1$ and due to Corollary \ref{cor: Power Function} it is sufficient to show that $k_{\cN} = k_{1,\hat{\cN}}Q_1$.
 Let $K_1:= k_1(X_n,X_n)$. It is easy to see that $k(x,X_n) = k_1(x,X_n) \kron Q_1 \in \R^{mn \times m}$ and $K = k(X_n,X_n) = K_1 \kron Q_1$ and
 therefore by applying Corollary \ref{cor: Reproducing kernel of NX} we get
 \begin{align*}
  k_{\cN}(x,y) &= k(x,X_n)^TK^+k(y,X_n) \\
  & = \left(k_1(x,X_n)\kron Q_1\right)^T \left(K_1 \kron Q_1\right)^{+} k_1(y,X_n) \kron(Q) \\
  & = \left( k_1(x,X_n)^TK_1^{+}k_1(y,X_n)\right)\kron\left(Q_1Q_1^{+}Q_1\right) \\
  & = k_{1,\hat{\cN}}(x,y) \kron Q_1 \\
  & = k_{1,\hat{\cN}}(x,y)  Q_1.
 \end{align*}
\end{proof}

We now extend this result to separable kernels of higher order. It is easy to see that for $k$ with decomposition $(k_i,Q_i)_{i=1}^p$ it holds
\begin{equation}
 \cH = \cH_{1} + \dots + \cH_{p} \label{eq: Sum of RKHS}
\end{equation}
where $\cH_{i}$ denotes the RKHS of the separable kernel $k_iQ_i$ of order $1$. By Lemma \ref{lem: RKHS separable kernel order 1} we know that $\cH_i$ can be written
as a direct sum. However, in \eqref{eq: Sum of RKHS} the sum does no longer need to be direct which causes issues when trying to determine the power-function of $k$ in terms of the
power-function of the kernels $k_i$. This can be traced back to the fact that for a set $X = \set{ x_1,\dots,x_n} \subset \Omega$ the space spanned by the functions $k(\cdot,x)\alpha$ for
$x \in X$ and $\alpha \in \R^m$ is not equal to the sum of the individual subspaces spanned by $k_i(\cdot,x)Q_i\alpha$.

\begin{lemma}[power-function bound of separable kernel of order $p$] \label{lem: power-function bound of separable kernel}
 Let $k : \Omega \times \Omega \rightarrow \R^{m\times m}$ be a separable matrix-valued kernel with decomposition $(k_i,Q_i)_{i=1}^p$ and $X = \set{ x_1,\dots,x_n} \subset \Omega$.
 Furthermore, let $\hat{k}_i := k_iQ_i$ with $\cH_{i}$ as its respective RKHS and
  \begin{align*}
  \cN_i &:= \myspan \set{ \hat{k}_i(\cdot,x)\alpha | \, x \in X, \, \alpha \in \R^m }, \quad i = 1,\dots N, \\
  \cN &:= \myspan \set{ k(\cdot,x)\alpha | \, x \in X, \, \alpha \in \R^m }
 \end{align*}
 Then it holds for all $x \in \Omega $ and $\alpha \in \R^{m}$:
 \begin{equation}
  \sum\limits_{i=1}^p \left( \cP_{\cN_i}^{\alpha}(x)\right)^2 \leq \cP_{\cN}^{\alpha}(x)^2.
 \end{equation}
\end{lemma}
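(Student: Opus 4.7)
The plan is to reduce the statement, via the closed-form expression of the power-function in terms of the reproducing kernel, to a purely linear-algebraic inequality for pseudoinverse quadratic forms. First I would apply Corollary~\ref{cor: Power Function} both to $\cP_{\cN}^{\alpha}(x)$ in $\cH$ and to each $\cP_{\cN_i}^{\alpha}(x)$ in $\cH_i$ (the RKHS of $\hat k_i = k_i Q_i$). Because $k = \sum_{i=1}^p \hat k_i$ gives $\alpha^T k(x,x)\alpha = \sum_i \alpha^T \hat k_i(x,x)\alpha$, summing the $p$ component identities cancels the diagonal terms and the claim becomes equivalent to the pointwise matrix inequality
\begin{equation*}
 \alpha^T k_{\cN}(x,x)\alpha \;\leq\; \sum_{i=1}^p \alpha^T (\hat k_i)_{\cN_i}(x,x)\alpha.
\end{equation*}

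Next I would use Corollary~\ref{cor: Reproducing kernel of NX} to express both sides explicitly. Setting $u_i := \hat k_i(X,x)\alpha$, $A_i := \hat k_i(X,X)$, $u := k(X,x)\alpha$, and $A := k(X,X)$, the additive structure $k = \sum_i \hat k_i$ immediately yields $u = \sum_i u_i$ and $A = \sum_i A_i$, and the problem reduces to the purely matrix-analytic claim
\begin{equation*}
 u^T A^+ u \;\leq\; \sum_{i=1}^p u_i^T A_i^+ u_i,
\end{equation*}
for positive semi-definite matrices $A_i$ with $u_i \in \range(A_i)$.

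For this inequality I would use the Schur complement criterion. For each $i$, the block matrix $\bigl[\begin{smallmatrix} A_i & u_i \\ u_i^T & u_i^T A_i^+ u_i\end{smallmatrix}\bigr]$ is positive semi-definite, since $A_i \succeq 0$ and $u_i \in \range(A_i)$. Summing these $p$ matrices yields the positive semi-definite block $\bigl[\begin{smallmatrix} A & u \\ u^T & \sum_i u_i^T A_i^+ u_i\end{smallmatrix}\bigr]$, and a single further application of the Schur complement criterion gives $\sum_i u_i^T A_i^+ u_i \geq u^T A^+ u$, completing the argument.

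The main subtlety I expect is verifying the range conditions that keep the pseudoinverse identities meaningful. The inclusion $u_i \in \range(A_i)$ follows from the Kronecker structure $A_i = k_i(X,X) \otimes Q_i$ and $u_i = k_i(X,x) \otimes (Q_i\alpha)$, together with $k_i$ positive definite and $Q_i$ positive semi-definite, so that $Q_i\alpha \in \range(Q_i)$. The analogous inclusion $u \in \range(A)$ is obtained from the standard identity $\range\bigl(\sum_i A_i\bigr) = \sum_i \range(A_i)$ for symmetric positive semi-definite matrices. Once these range conditions are in place, the remainder of the proof is essentially mechanical.
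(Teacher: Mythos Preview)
Your argument is correct and takes a genuinely different route from the paper. The paper proceeds functional-analytically: it invokes the Aronszajn sum theorem (proved in the appendix) to identify $\cM:=\cN_1+\dots+\cN_p$ as a closed subspace of $\cH$ with reproducing kernel $\sum_i \hat k_{i,\cN_i}$, deduces the exact identity $\cP_{\cM}^{\alpha}(x)^2=\sum_i\cP_{\cN_i}^{\alpha}(x)^2$, and then concludes from the inclusion $\cN\subset\cM$. You instead reduce, via Corollaries~\ref{cor: Power Function} and~\ref{cor: Reproducing kernel of NX}, to the purely linear-algebraic inequality $u^TA^+u\le\sum_i u_i^TA_i^+u_i$ for $A=\sum_i A_i$, $u=\sum_i u_i$, and prove it by summing the positive semi-definite Schur blocks $\bigl[\begin{smallmatrix}A_i&u_i\\u_i^T&u_i^TA_i^+u_i\end{smallmatrix}\bigr]$. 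Your approach is more elementary and self-contained, avoiding the sum-of-RKHS machinery entirely. The paper's approach, on the other hand, buys a structural interpretation: it exhibits $\sum_i\cP_{\cN_i}^{\alpha}(x)^2$ as the power function of an explicit intermediate subspace $\cM\supset\cN$, which makes the subsequent equality result for uncoupled decompositions (the next lemma) immediate, since there one only has to check $\cM=\cN$.

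One small simplification: the range condition $u_i\in\range(A_i)$ follows directly from the positive definiteness of $\hat k_i$ applied to the enlarged point set $X\cup\{x\}$ (the Gramian $\hat k_i(X\cup\{x\},X\cup\{x\})$ is positive semi-definite, hence $\range(\hat k_i(X,x))\subset\range(\hat k_i(X,X))$), so you do not need to unpack the Kronecker structure or assume $k_i$ and $Q_i$ are separately positive (semi-)definite. Likewise, $u\in\range(A)$ is not needed as an extra hypothesis in your last Schur-complement step: it is already forced by the positive semi-definiteness of the summed block matrix.
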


\begin{proof}
  $\cN_{i} \subset \cH_i$ is a closed subspace with reproducing kernel $\hat{k}_{i,\cN_i}$ and by Corollary \ref{cor: Power Function} it holds
 \begin{equation}
  \cP_{\cN_i}^{\alpha}(x)^2 = \alpha^T\left(\hat{k}_i(x,x) - \hat{k}_{i,\cN_i}(x,x)\right)\alpha. \label{eq: power-function proof}
 \end{equation}
We make use of the fact that the sum $\cM := \cN_1 + \dots + \cN_p$ is an RKHS with reproducing kernel $k_{\cM} = \hat{k}_{1,\cN_1} + \dots + \hat{k}_{p,\cN_p}$ and norm given by
\begin{equation*}
 \| f \|_{\cM} = \min \set{ \sum\limits_{i=1}^p \| f_i \|_{\cN_i}^2 \left.\right| \, f = \sum\limits_{i=1}^p f_i, \, f_i \in \cN_i}.
\end{equation*}
A proof for this assertion for the scalar-valued case can be found in \cite{Aronszajn1950}. The proof for the matrix-valued case only involves minor modifications.
For the sake of completeness it is shown in the appendix.

It now holds
\begin{align*}
 \| f \|_{\cM} & = \min \set{ \sum\limits_{i=1}^p \| f_i \|_{\cN_i}^2 \left.\right| \, f = \sum\limits_{i=1}^p f_i, \, f_i \in \cN_i} \\
	       & = \min \set{ \sum\limits_{i=1}^p \| f_i \|_{\cH_i}^2 \left.\right| \, f = \sum\limits_{i=1}^p f_i, \, f_i \in \cN_i} \\
	       & = \| f \|_{\cH}
\end{align*}
and therefore $k_{\cM}$ is the reproducing kernel of the subspace $\cM \subset \cH$. Using \eqref{eq: power-function proof} and Corollary \ref{cor: Power Function} we conclude that
\begin{align}
 \cP_{\cM}^{\alpha}(x)^2 & = \alpha^T\left( \sum\limits_{i=1}^p \hat{k}_i(x,x) - \sum\limits_{i=1}^p \hat{k}_{i,\cN_i}(x,x) \right) \alpha \nonumber \\
 & = \sum\limits_{i=1}^p \alpha^T \left( \hat{k}_i(x,x) - \hat{k}_{i,\cN_i}(x,x)\right) \alpha \nonumber \\
 & = \sum\limits_{i=1}^p \cP_{\cN_i}^{\alpha}(x)^2. \label{eq: power-function sum identity}
\end{align}
Since $\cN \subset \cM$ is a subspace the orthogonal complements satisfy $\cM^{\perp} \subset \cN^{\perp}$ and by applying Corollary \ref{cor: Alternative representation of the power-function} it follows
\begin{equation*}
 \sum\limits_{i=1}^p \cP_{\cN_i}^{\alpha}(x)^2 = \cP_{\cM}^{\alpha}(x)^2 = \Hnorm{ \Pi_{\cM^{\perp}}k(\cdot,x)\alpha} \leq \Hnorm{ \Pi_{\cN^{\perp}}k(\cdot,x)\alpha} = \cP_{\cN}^{\alpha}(x)^2.
\end{equation*}
\end{proof}
We see that in general equality cannot be guaranteed. It only holds if the space $\cM$ is equal to $\cN$. This is equivalent to the fact that all $\hat{k}_{i}(\cdot,x)\alpha$ with $x \in X$ lie in $\cN$.
We will see in the following that this can be achieved when the decomposition is uncoupled:

\begin{lemma}[power-function of uncoupled separable kernels of order $p$]
 Let $k : \Omega \times \Omega \rightarrow \R^{m\times m}$ be a separable matrix-valued kernel with uncoupled decomposition $(k_i,Q_i)_{i=1}^p$ and $X = \set{ x_1,\dots,x_n} \subset \Omega$.
 Furthermore, let $\hat{k}_i := k_iQ_i$ with $\cH_{i}$ as its respective RKHS and
  \begin{align*}
  \cN_i &:= \myspan \set{ \hat{k}_i(\cdot,x)\alpha | \, x \in X, \, \alpha \in \R^m }, \quad i = 1,\dots N, \\
  \cN &:= \myspan \set{ k(\cdot,x)\alpha | \, x \in X, \, \alpha \in \R^m }
 \end{align*}
 Then it holds for all $x \in \Omega $ and $\alpha \in \R^{m}$:
 \begin{equation}
  \sum\limits_{i=1}^p \left( \cP_{\cN_i}^{\alpha}(x)\right)^2 = \cP_{\cN}^{\alpha}(x)^2.
 \end{equation}
\end{lemma}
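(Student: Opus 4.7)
The plan is to leverage the previous lemma, which already established
\[
\sum_{i=1}^p \cP_{\cN_i}^\alpha(x)^2 = \cP_{\cM}^\alpha(x)^2 \leq \cP_{\cN}^\alpha(x)^2
\]
where $\cM := \cN_1 + \dots + \cN_p$, the first equality coming from the reproducing kernel of the sum being $\sum_i \hat{k}_{i,\cN_i}$ and the inequality from $\cN \subset \cM$. Since equality in $\cP_\cM \leq \cP_\cN$ is attained precisely when $\cN = \cM$, the entire task reduces to showing $\cN_i \subset \cN$ for every $i$ under the uncoupling assumption.

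To prove $\cN_i \subset \cN$, I would argue that for every $i \in \{1,\dots,p\}$, every $x \in X$ and every $\alpha \in \R^m$, there is $\gamma \in \R^m$ with
\[
k(\cdot,x)\gamma \;=\; k_i(\cdot,x)Q_i\alpha \;=\; \hat{k}_i(\cdot,x)\alpha,
\]
so that $\hat{k}_i(\cdot,x)\alpha$ is already an element of $\cN$. In view of $k(\cdot,x)\gamma = \sum_j k_j(\cdot,x)Q_j\gamma$ and the linear independence of the scalar kernels $k_j$ (Lemma \ref{lem: Sufficient and necessary minimality condition}), this reduces to finding $\gamma \in \R^m$ satisfying
\[
Q_i\gamma = Q_i\alpha, \qquad Q_j\gamma = 0 \quad (j\neq i).
\]

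The existence of such $\gamma$ is the technical heart of the proof, and is where uncoupledness is used. My plan is to set $V_i := \bigcap_{j\neq i} \ker(Q_j)$, note that symmetry of the $Q_j$ gives $\ker(Q_j) = R(Q_j)^\perp$, and therefore
\[
V_i = \Bigl(\textstyle\sum_{j\neq i} R(Q_j)\Bigr)^\perp.
\]
Then I would compare dimensions. By Lemma \ref{lem: Sum of Ranges}, uncoupledness means the ranges form a direct sum, so
\[
\dim V_i = m - \sum_{j\neq i}\rank(Q_j), \qquad \dim(V_i \cap \ker Q_i) = m - \sum_{j=1}^p \rank(Q_j).
\]
Rank-nullity applied to $Q_i|_{V_i}$ then gives $\dim Q_i(V_i) = \rank(Q_i) = \dim R(Q_i)$, so $Q_i|_{V_i}$ is surjective onto $R(Q_i)$. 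In particular there exists $\gamma \in V_i$ with $Q_i\gamma = Q_i\alpha$, which delivers the required identity.

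The only potential obstacle is this surjectivity computation; it rests crucially on both the symmetry of the $Q_i$ (so that $R(Q_j)^\perp = \ker Q_j$) and the direct-sum form of uncoupledness. Once the existence of $\gamma$ is established, the inclusion $\cN_i \subset \cN$ is immediate for every $i$, yielding $\cM \subset \cN$ and hence $\cM = \cN$, so the inequality of the preceding lemma becomes an equality, completing the proof.
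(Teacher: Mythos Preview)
Your proposal is correct and follows the same overall strategy as the paper: reduce to showing $\cN_i \subset \cN$ by producing, for each $\alpha$, a vector $\gamma$ with $Q_i\gamma = Q_i\alpha$ and $Q_j\gamma = 0$ for $j\neq i$. The only difference is in how that vector is obtained. You pass to orthogonal complements, using symmetry of the $Q_j$ to write $V_i = \bigl(\sum_{j\neq i} R(Q_j)\bigr)^\perp$ and then count dimensions to force $Q_i|_{V_i}$ onto $R(Q_i)$. The paper's argument is shorter and does not invoke symmetry: since $Q_i\alpha \in R(Q_i) \subset \bigoplus_j R(Q_j) = R\bigl(\sum_j Q_j\bigr)$, one can pick $\beta$ with $\bigl(\sum_j Q_j\bigr)\beta = Q_i\alpha$; directness of the sum then forces $Q_j\beta = 0$ for $j\neq i$ and $Q_i\beta = Q_i\alpha$ immediately. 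One small remark: your appeal to Lemma~\ref{lem: Sufficient and necessary minimality condition} for linear independence of the $k_j$ is unnecessary (and minimality is not assumed in the statement) --- you only need the sufficiency direction, which is a one-line computation.
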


\begin{proof}
 As mentioned before, it is sufficient to show that $\hat{k}_{i}(\cdot,x)\alpha \in \cN$ for all $x \in X$, $\alpha \in \R^m$ and $i = 1,\dots,p$. Because the decomposition is uncoupled
 it holds with Lemma \ref{lem: Sum of Ranges} that
 \begin{equation*}
  R\left( \sum\limits_{i=1}^p Q_i\right) = \bigoplus\limits_{i=1}^p R(Q_i).
 \end{equation*}
Therefore, for every $\alpha \in \R^m$ there exists a $\beta \in \R^m$ such that $Q_i\alpha = (Q_1 + \dots + Q_p)\beta$. Since the sum is direct it holds that 
$Q_j\beta = 0$ for $j \neq i$ and therefore
\begin{equation*}
 \hat{k}_{i}(\cdot,x)\alpha = k_i(\cdot,x)Q_i\alpha = \sum\limits_{i=1}^p k_i(\cdot,x)Q_i\beta = k(\cdot,x)\beta \in \cN.
\end{equation*}
\end{proof}

Lastly, we want to remark that while a lower bound in terms of the sum of the power-functions for the matrix-valued kernels of order $1$ can be achieved, as seen in
Lemma \ref{lem: power-function bound of separable kernel}, an upper bound of this kind is not available in general as the following example shows.


\begin{example}
 Let $\Omega \subset \R^d$ and $k_1,k_2 : \Omega \times \Omega \rightarrow \R$ be the polynomial kernels given by
 \begin{equation*}
  k_1(x,y) = x^Ty \quad \text{ and } \quad k_2(x,y) = (x^Ty)^2,
 \end{equation*}
 respectively, then the RKHS $\cH_1$ is equal to the space of multivariate polynomials of degree $1$ and $\cH_2$ to the space of multivariate polynomials of degree $2$. In particular,
 $\dim(\cH_1) = d$ and $\dim(\cH_2) = d(d+1)/2$ and therefore by choosing $X = \set{x_i}_{i=1}^{d(d+1)/2}$ such that $\set{k_2(\cdot,x_i)}_{i=1}^{d(d+1)/2}$ is linearly independent, the power-functions
 $\cP_{\cN_1(X)}$ and $\cP_{\cN_2(X)}$ vanish. However, the RKHS for $k := k_1 + k_2$ is given by the space of multivariate polynomials of degree $1$ or $2$ for which $\dim(\cH) = d(d+3)/2$ holds.
 Consequently, $\cN(X) \neq \cH$ and $\cP_{\cN(X)}$ does not vanish.

\end{example}



\section{Numerical Examples} \label{sec: Numerical Example}

\subsection{Example 1}
We now investigate the approximation quality of interpolation with matrix-valued kernels compared to a scalar-valued, i.e.\ componentwise approach. For this, we
consider the target function $f : \Omega := [-2,2] \rightarrow \R^3$ given by
\begin{equation*}
 f(x) := 
 \begin{pmatrix}
        \frac{1}{\sqrt{3}} & \frac{1}{\sqrt{3}} & \frac{1}{\sqrt{3}} \\
        0 		  & \frac{1}{\sqrt{2}} & -\frac{1}{\sqrt{2}} \\
       -\frac{\sqrt{2}}{\sqrt{3}} & \frac{1}{\sqrt{6}} & \frac{1}{\sqrt{6}} \\
 \end{pmatrix}
 \begin{pmatrix}
          e^{-2.5(x-0.5)^2} + e^{-2.0(x+0.5)^2} \\
          e^{-3.5(x-0.7)^2} \\
          1
 \end{pmatrix}
\end{equation*}
and the uncoupled separable kernels $k_1,\dots,k_4 : \Omega \times \Omega \rightarrow \R^{3 \times 3}$ of order $1$, $3$, $2$ and $3$, respectively, given by
\begin{align*}
 k_1(x,y) & := e^{-\varepsilon_{11}(x-y)^2} I_3 \\
 k_2(x,y) & := e^{-\varepsilon_{21}(x-y)^2}e_1e_1^T +  e^{-\varepsilon_{22}(x-y)^2}e_2e_2^T + e^{-\varepsilon_{23}(x-y)^2}e_3e_3^T \\     
 k_3(x,y) & := e^{-\varepsilon_{31}(x-y)^2}v_1v_1^T + e^{-\varepsilon_{32}(x-y)^2}\left(v_2v_2^T + v_3v_3^T\right) \\
 k_4(x,y) & := e^{-\varepsilon_{41}(x-y)^2}v_1v_1^T + e^{-\varepsilon_{42}(x-y)^2}v_2v_2^T + e^{-\varepsilon_{43}(x-y)^2}v_3v_3^T, \\
\end{align*}
with shape parameters $\varepsilon_{11},\dots,\varepsilon_{43} \in (0,\infty)$. 
Here $e_i$ denotes the $i$-th standard basis vector of $\R^3$ and $v_1,v_2,v_3$ are an ONB of eigenvectors of the covariance matrix $C$ of $f$, which is computed by taking $401$ random evaluations of $f$ and setting
\begin{equation*}
  C := \frac{1}{400} \sum\limits_{i=1}^{401} (f_i-\mu)(f_i-\mu)^T,
\end{equation*}
where $\mu \in \R^3$ contains the componentwise mean.

The kernels $k_1$ and $k_2$ handle the data componentwise that is, for the kernel $k_1$ the same scalar-valued kernel is used for every component, 
while for $k_2$ each component is treated by a different scalar-valued kernel. However, for the kernels $k_3$ and $k_4$ this is not the case.
The shape parameters are determined by minimizing the maximum pointwise interpolation error $e_{k_i}(x) := \| f(x) - s_{k_i}(x) \|$ evaluated on a validation set $\Omega_{M}$
of $40$ randomly chosen points in $\Omega$ for $50$  logarithmically equidistantly distributed parameters in $M := [0.1,100]$, where $s_{k_i}$ is the
interpolant on the set of $35$ equidistantly distributed centers $X := \set{ -2 + \frac{4}{34}i | i = 0,\dots,34}$ belonging to the RKHS that corresponds to $k_i$. The resulting parameters are listed in 
Table \ref{table:1}.
\begin{table}[h]
\begin{center}
\begin{tabular}{| c || c || c c c || c c || c c c |}
\hline
Parameter & $\varepsilon_{11}$ &  $\varepsilon_{21}$ &  $\varepsilon_{22}$ &  $\varepsilon_{23}$ 
&  $\varepsilon_{31}$ &  $\varepsilon_{32}$ &  $\varepsilon_{41}$ &  $\varepsilon_{42}$ &  $\varepsilon_{43}$ \\
\hline
Value &  1.931 &  1.931 &  1.931 &  1.600 &  0.244 &  3.393 &  0.244 &  3.393 &  3.393 \\
\hline
\end{tabular}
\caption{Results of the parameter selection for the different kernels.}
\label{table:1}
\end{center}
\end{table}

We note that for the kernels $k_1$ and $k_2$ the selected shape parameters only differ in the third component, 
where a smaller parameter and therefore wider Gaussian was choosen for $k_2$. For the kernels $k_3$ and $k_4$ the
selected parameters result in the same matrix-valued kernel. This can be explained by the fact that the eigenvectors
$v_2$ and $v_3$ of the covariance matrix $C$ were a-priori grouped together based on the fact that their corresponding
eigenvalues $\lambda_2 = 0.112$ and $\lambda_3 = 0.206$ are of similar magnitude. This is reasonable as the eigenvalues
are precisely the standard deviation of the data along the directions $v_2$ and $v_3$ and therefore the same Gaussian might be used
for both directions.

Using the above parameters we compute the maximum pointwise interpolation error $e_{k_i}(X_N)$ on a test set $\Omega_T \subset \Omega$ of $400$ equidistantly distributed points
for an increasing number of equidistant training centers, i.e. $X_N := \set{ -2 + \frac{4}{N-1}i | i = 0,\dots,N-1}$. The results for $N = 1,\dots,35$ are plotted in Figure \ref{figure:1}.

We can see that for a small number of centers, the difference in the approximation quality between the scalar-valued and matrix-valued approach is negligible. However,
as the number of centers $N$ increases, the kernel $k_3 = k_4$ begins to outperform the componentwise kernel $k_1$ and $k_2$. On the one hand, this leads to a higher accuracy for a fixed 
number of centers, i.e.\ a difference of almost three orders of magnitude for $N = 21$. On the other hand, this allows for a smaller expansion size while maintaining the same order of
accuracy and therefore leads to a sparser approximant.

\begin{figure}[ht]
\centering
\includegraphics[scale=0.5]{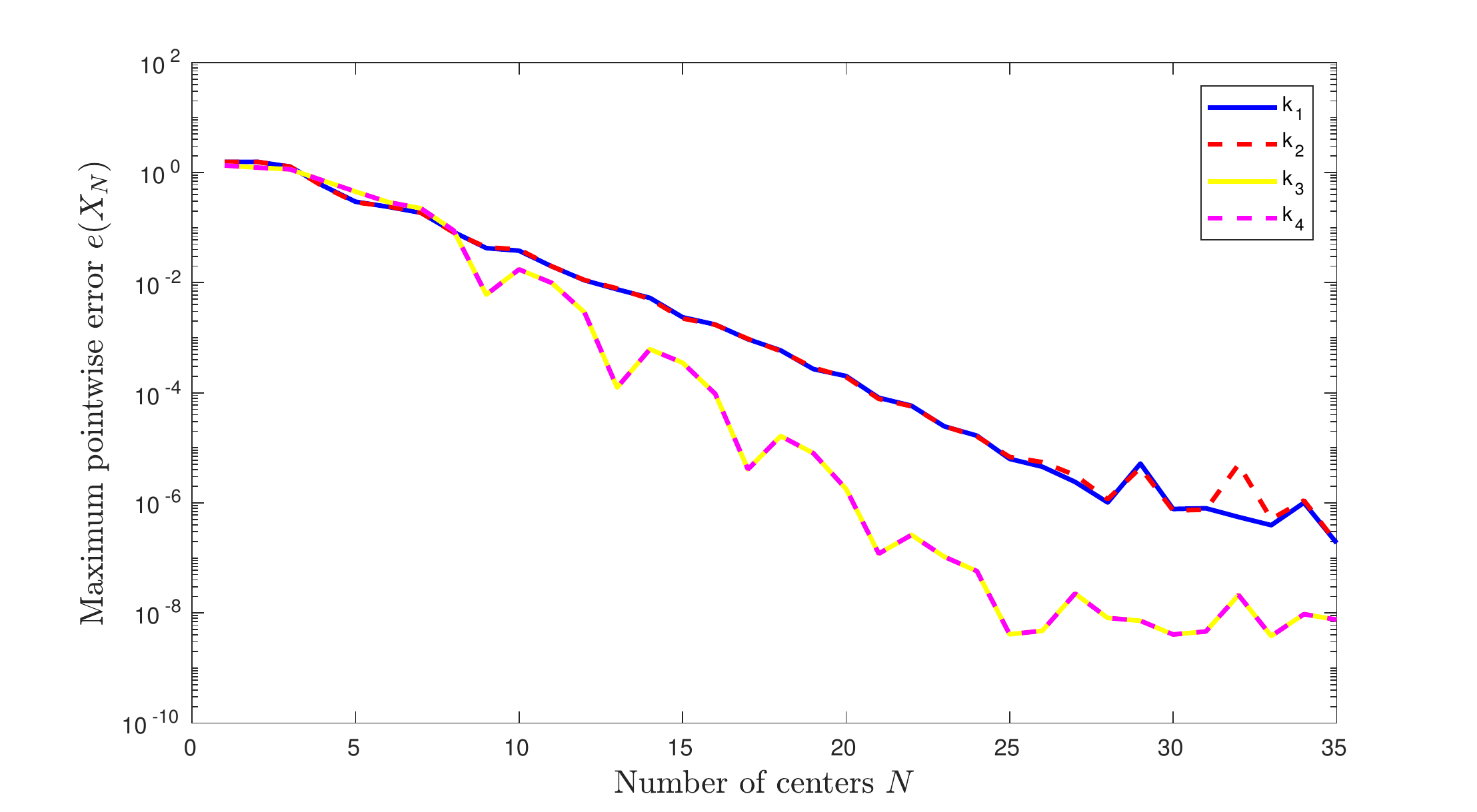}
\caption{Maximum pointwise error measured in the Euclidean norm for the kernels $k_1$ to $k_4$ and for increasing number of centers $N$.}
\label{figure:1}
\end{figure}

\subsection{Example 2}

We now want to verify the validity of the error bounds stated in Corollary \ref{cor: Bound on the interpolation error}. To this end we consider the domain $\Omega := [-1,1]^2$ and the separable
kernel $k$ with decomposition $(k_1,Q_1)_{i=1}^3$ given by $k_i = e^{-i\|x-y\|^2}$ and
\begin{equation*}
 Q_1 = \begin{pmatrix}
       1 & 1 & -1 & -1 \\
       1 & 1 & -1 & -1 \\
      -1 & -1 & 1 & 1  \\
      -1 & -1 & 1 & 1
      \end{pmatrix},
 Q_2 = \begin{pmatrix}
       1 & 0 & 0 & 0 \\
       0 & 1 & 0 & 0 \\
       0 & 0 & 1 & -1  \\
       0 & 0 & -1 & 1
      \end{pmatrix} \text{ and }
 Q_3 = \begin{pmatrix}
       0 & 0 & 0 & 0 \\
       0 & 0 & 0 & 0 \\
       0 & 0 & 1 & -1  \\
       0 & 0 & -1 & 1
      \end{pmatrix}.
\end{equation*}
We consider the target function $f \in \cH$ given by
\begin{equation*}
 f(x) = \sum\limits_{i=1}^5 k(x,y_i)\alpha_i,
\end{equation*}
where $y_1, \dots, y_5 \in \Omega $ and $\alpha_1, \dots, \alpha_5 \in \R^{4}$ were randomly chosen.
We further select $X = \{x_1, \dots, x_{100} \}$ random points and for $X_i := \{x_1, \dots, x_i \}$ compute the error in the Euclidean-, infinity- and one-norm as well as the error bounds
\begin{align*}
 \Delta_{2}^1 := \| k_{\cN(X_i)}(x,x) \|_2 \|f - \Pi_{\cN(X_i)} f \|, & & \Delta_{2}^2 := \| k_{\cN(X_i)}(x,x) \|_2 \|f \|, \\
 \Delta_{\infty}^1 := \max\limits_{j=1,\dots,4} | k_{\cN(X_i)}(x,x)_{jj} | \|f - \Pi_{\cN(X_i)} f \|, & & \Delta_{\infty}^2 := \max\limits_{j=1,\dots,4} | k_{\cN(X_i)}(x,x)_{jj} |\|f \|, \\
 \Delta_{1}^1 := \| k_{\cN(X_i)}(x,x) \|_2 \|f - \Pi_{\cN(X_i)} f \|, & & \Delta_{1}^2 := \| k_{\cN(X_i)}(x,x) \|_2 \|f \| \\
\end{align*}
The results are plotted in Figures \ref{figure:2} - \ref{figure:4}, respectively. 
\begin{figure}[ht]
\centering
\includegraphics[scale=0.5]{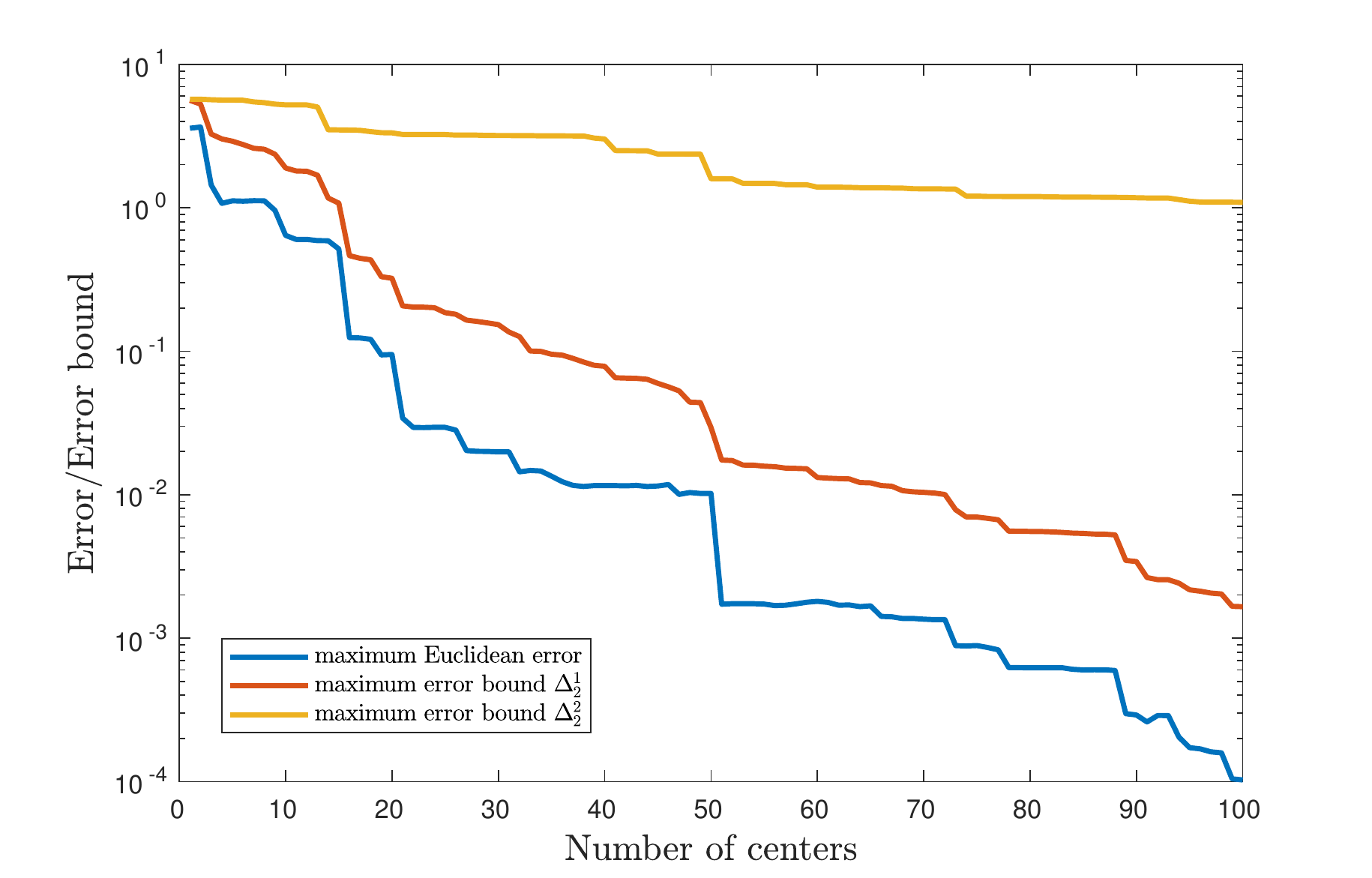}
\caption{Maximum pointwise error measured in the Euclidean norm for increasing number of centers.}
\label{figure:2}
\end{figure}
\begin{figure}[ht]
\centering
\includegraphics[scale=0.5]{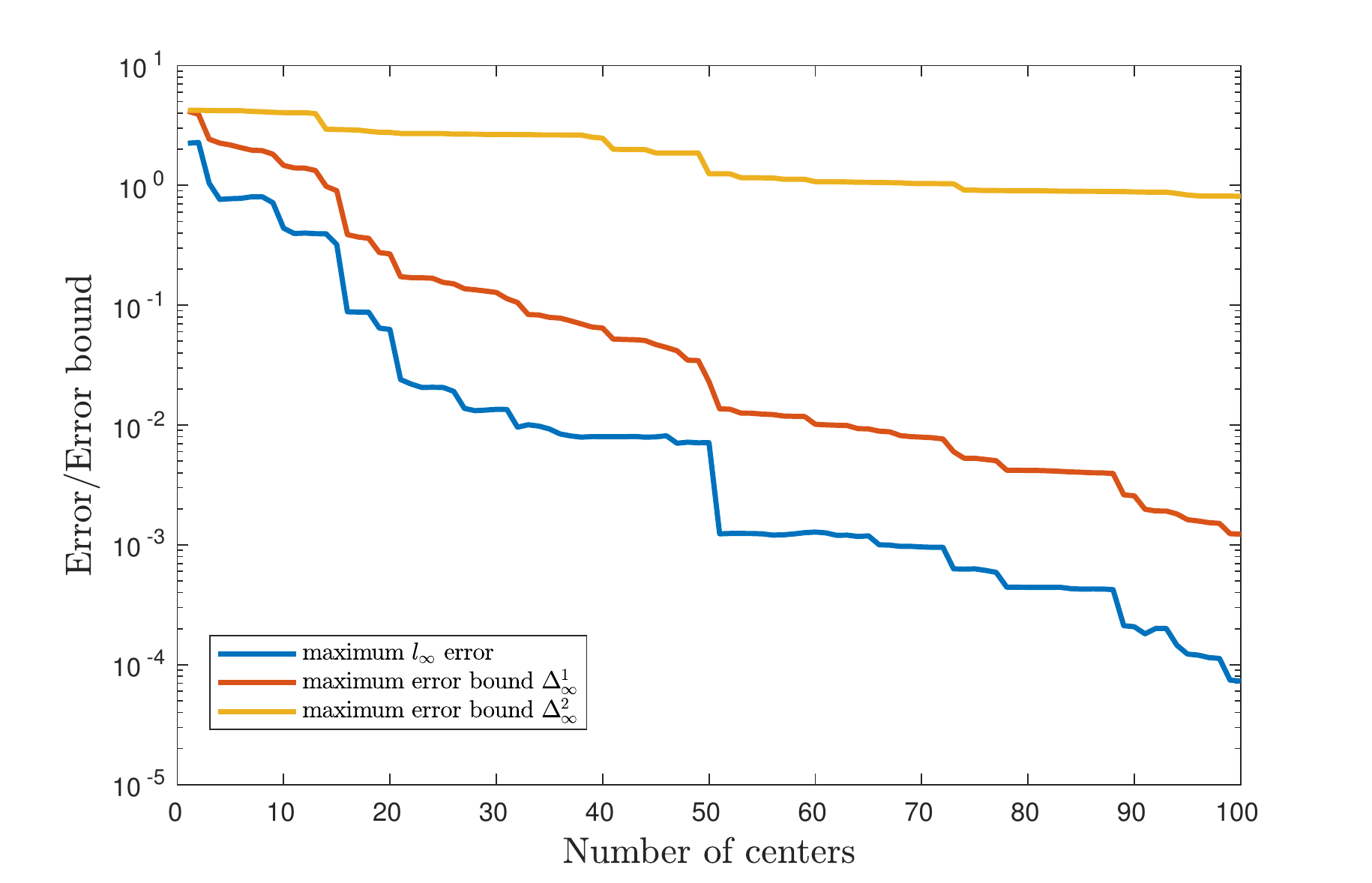}
\caption{Maximum pointwise error measured in the infinity norm for increasing number of centers.}
\label{figure:3}
\end{figure}
\begin{figure}[ht]
\centering
\includegraphics[scale=0.5]{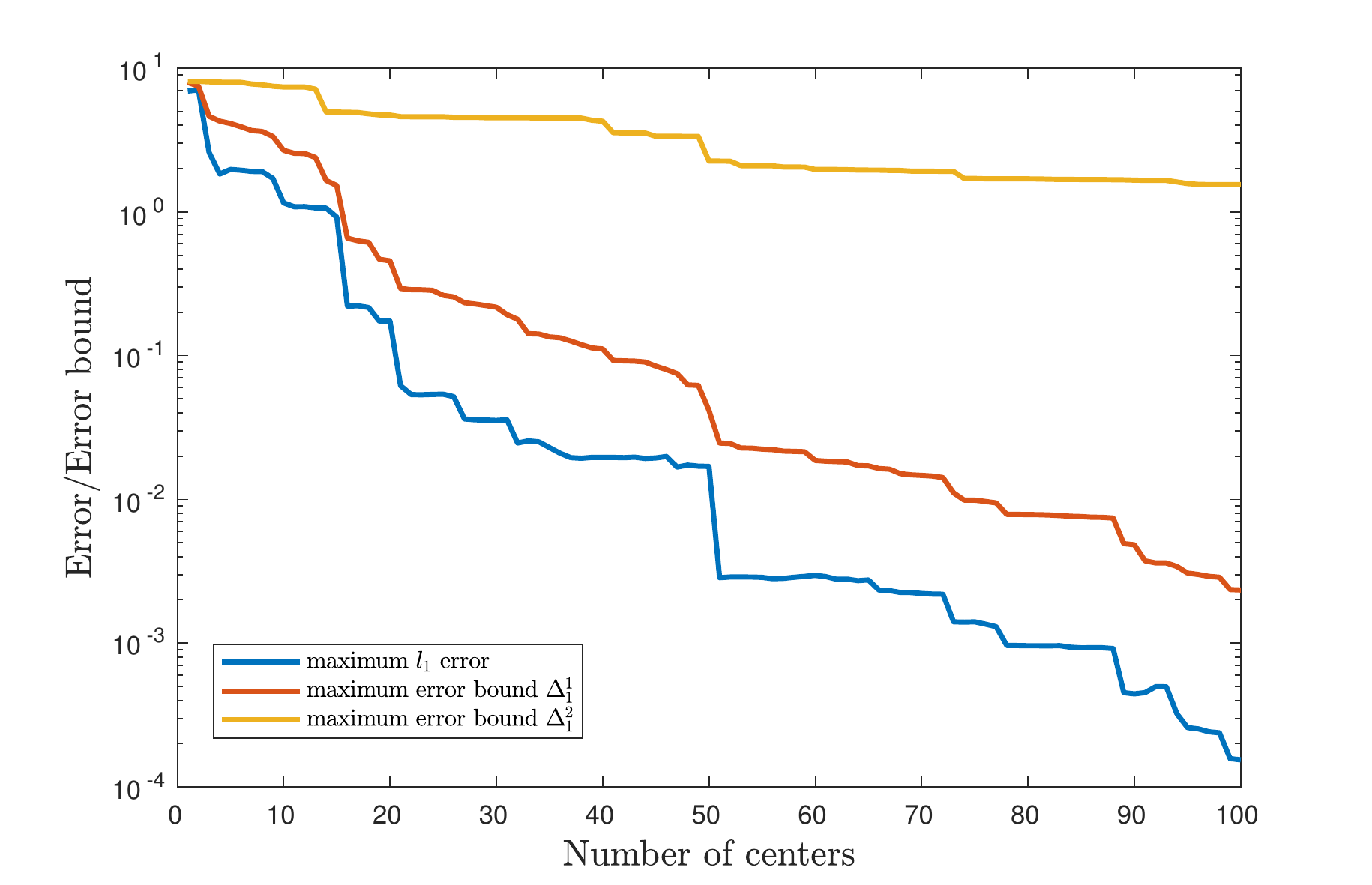}
\caption{Maximum pointwise error measured in the one norm for increasing number of centers.}
\label{figure:4}
\end{figure}


\section{Conclusion}
In this paper we recalled the concept of matrix-valued kernels and showed how they can be used to compute approximations or surrogate models
for which a-priori error estimate in various norms are available by means of the power-function. Furthermore, we introduced a new subclass of 
separable matrix-valued kernels, for which the power-function can be traced back to the power-functions of scalar-valued kernels. In an artificial
example for a low-dimensional output we illustrated how matrix-valued kernels can be used to encode correlations between function components
which leads to a significant improvement in the quality of the approximation.

Future work will investigate the selection of suitable centers via Greedy algorithms, where we obtained initial results in \cite{WH18a}.


\appendix
\section*{Appendix}
\begin{theorem*}
  Let $\cH_1, \dots, \cH_p$ be  RKHS with reproducing kernels $k_1,\dots,k_p$. Then $\cH = \bigoplus\limits_{i=1}^p \cH_i$ is a RKHS with reproducing kernel $k = \sum\limits_{i=1}^p k_i$ and norm given by
  \begin{equation*}
   \| f \|_{\cH}^2 = \min\left\{ \sum\limits_{i=1}^p \|f_i\|_{\cH_i}^2 \left| \right. \, f = \sum\limits_{i=1}^p f_i, \, f_i \in \cH_i \right\}
  \end{equation*}
\end{theorem*}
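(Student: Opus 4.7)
The plan is to construct $\cH$ as a quotient of the external Hilbert direct sum of the individual spaces, mirroring Aronszajn's classical argument adapted to the matrix-valued setting. Concretely, I would form the product Hilbert space $\tilde{\cH} := \cH_1 \times \cdots \times \cH_p$ equipped with
\[
\scalprod{(f_1,\dots,f_p)}{(g_1,\dots,g_p)}_{\tilde{\cH}} := \sum_{i=1}^p \scalprod{f_i}{g_i}_{\cH_i},
\]
together with the linear summation map $T : \tilde{\cH} \to \{\R^m\text{-valued functions on }\Omega\}$ given by $T(f_1,\dots,f_p) := f_1 + \dots + f_p$. By construction, $\cH = \range(T)$, and the task reduces to transferring the Hilbert structure from $\tilde{\cH}$ down to $\cH$ in a way compatible with the proposed norm.

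The key technical step is to show that $N := \mynull(T) \subset \tilde{\cH}$ is closed. For any $x \in \Omega$ and $\alpha \in \R^m$, the functional
\[
(f_1, \dots, f_p) \mapsto \Big(\sum_{i=1}^p f_i\Big)(x)^T \alpha = \sum_{i=1}^p \delta_{x}^{\alpha}(f_i)
\]
is continuous on $\tilde{\cH}$ because each directional point evaluation is bounded on the respective $\cH_i$. Hence $N$, being the intersection of the kernels of these functionals over all $(x,\alpha)$, is closed. This yields the orthogonal decomposition $\tilde{\cH} = N \oplus N^{\perp}$, and the restriction $T|_{N^{\perp}} : N^{\perp} \to \cH$ is a bijection. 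Transporting the inner product through this bijection equips $\cH$ with a Hilbert space structure whose norm is precisely
\[
\| f \|_{\cH}^2 = \min\left\{ \sum_{i=1}^p \| f_i \|_{\cH_i}^2 : f = \sum_{i=1}^p f_i, \, f_i \in \cH_i \right\},
\]
the minimum being attained at the unique lift of $f$ lying in $N^{\perp}$.

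It remains to verify that $k := \sum_{i=1}^p k_i$ is the reproducing kernel of $\cH$. For fixed $x \in \Omega$ and $\alpha \in \R^m$, the element $K := \sum_i k_i(\cdot, x)\alpha$ lies in $\cH$ because each summand lies in the corresponding $\cH_i$. For any $f \in \cH$ and any decomposition $f = \sum_i f_i$, the reproducing property in each $\cH_i$ yields
\[
\sum_{i=1}^p \scalprod{f_i}{k_i(\cdot,x)\alpha}_{\cH_i} = \sum_{i=1}^p f_i(x)^T \alpha = f(x)^T \alpha.
\]
Since any two decompositions of $f$ differ by an element of $N$, which contributes zero when paired with a lift in $N^{\perp}$, this value is independent of the decomposition and coincides with $\scalprod{f}{K}_{\cH}$. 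Uniqueness of the reproducing kernel then identifies $k$.

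The main obstacle is that the decomposition $f = \sum_i f_i$ is in general not unique, so one cannot directly define the inner product on $\cH$ componentwise; the quotient construction together with the closedness of $N$ is what makes the definition well-posed and produces the infimum-style norm. The matrix-valued aspect introduces no essential complication, since only directional point evaluations $\delta_x^{\alpha}$ are needed and these behave exactly like scalar evaluations in the original Aronszajn argument.
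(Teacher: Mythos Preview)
Your proposal is correct and follows essentially the same quotient-of-the-product-space argument as the paper: form the external Hilbert sum, mod out by the kernel of the summation map, transport the inner product, and verify reproduction via orthogonality to that kernel. The only cosmetic differences are that you treat general $p$ directly while the paper reduces to $p=2$ by induction, and you supply an explicit reason (intersection of kernels of continuous directional evaluations) for the closedness of $N$, which the paper merely asserts.
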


\begin{proof}
 By the principle of induction it is sufficient to consider the case $p = 2$. Therefore, let $ \cM := \cH_1 \times \cH_2$. One easily verifies that $\cM$ equipped with the inner product
 \begin{equation*}
  \langle (f_1,f_2), (g_1,g_2) \rangle_{\cM} = \langle f_1,g_1 \rangle_{\cH_1} + \langle f_2,g_2\rangle_{\cH_2}
 \end{equation*}
 is an RKHS with reproducing kernel $k_{\cM} = (k_1,k_2)$. Furthermore, let $S : \cM \rightarrow \cH_1 + \cH_2$ be given by
 \begin{equation*}
  S(f_1,f_2) = f_1 + f_2
 \end{equation*}
 and denote $\cN := S^{-1}(\{0\}) = \cH_1 \cap \cH_2$.
\end{proof}
Then $\cN$ is a closed subspace and thus $\cM = \cN \oplus \cN^{\perp}$. Therefore, $ T := S|_{\cN^{\perp}} : \cN^{\perp} \rightarrow \cH$ is a bijection and we equip $\cH$ with the inner product
\begin{equation*}
 \langle f,g \rangle_{\cH} = \langle T^{-1}(f),T^{-1}(g) \rangle_{\cM}.
\end{equation*}
For any arbitrary $f \in \cH$ we now have
\begin{equation*}
 S^{-1}(\{f\}) = T^{-1}(f) + \cN
\end{equation*}
and therefore
\begin{equation*}
 \| f \|^2_{\cH} = \|T^{-1}(f) \|_{\cM}^2 = \min \{ \|f_1\|_{\cH_1}^2 + \| f_2 \|_{\cH_2}^2 | f_1 + f_2 = f, \, f_1 \in \cH_1, f_2 \in \cH_2\}.T
\end{equation*}
It remains to show that $k = k_1 + k_2$ satisfies the reproducing property. By definition $k(\cdot,x)\alpha \in \cH$ is clear. Let $f \in \cH$ and let $(g_1,g_2) = T^{-1}(k(\cdot,x)\alpha)$. It now holds
that $(g_1 - k_1(\cdot,x)\alpha,g_2 - k_2(\cdot,x)\alpha) \in \cN$ and $(f_1,f_2):= T^{-1}(f) \in \cN^{\perp}$. Therefore,
\begin{align*}
 \langle f, k(\cdot,x)\alpha \rangle_{\cH} & = \langle T^{-1}(f), (g_1,g_2) \rangle_{\cM} \\
  & = \langle T^{-1}(f), (g_1 - k_1(\cdot,x)\alpha, g_2 - k_2(\cdot,x)\alpha) \rangle_{\cM} + \langle T^{-1}(f), (k_1(\cdot,x)\alpha,k_2(\cdot,x)\alpha) \rangle_{\cM}\\
  & = \langle (f_1,f_2), (k_1(\cdot,x)\alpha,k_2(\cdot,x)\alpha \rangle_{\cM} \\
  & = \langle f_1, k_1(\cdot,x)\alpha\rangle_{\cH_1} + \langle f_2, k_2(\cdot,x)\alpha\rangle_{\cH_2} \\
  & = f_1(x)^T\alpha + f_2(x)^T\alpha \\
  & = f(x)^T \alpha.
\end{align*}
Therefore, $k$ is the reproducing kernel of $\cH$.

\bibliography{anm}

\begin{thebibliography}{10}

\bibitem{Alvarez2012}
M.~Alvarez, L.~Rosasco, and N.~D. Lawrence.
\newblock Kernels for vector-valued functions: a review.
\newblock {\em Foundations and Trends in Machine Learning}, 4(3):195--266,
  2012.

\bibitem{Aronszajn1950}
N.~Aronszajn.
\newblock Theory of reproducing kernels.
\newblock {\em Transactions of the American Mathematical Society}, 68:337--404,
  1950.

\bibitem{Beatson2011b}
R.~K. Beatson, W.~zu~Castell, and S.~J. Schr{\"o}dl.
\newblock Kernel-based methods for vector-valued data with correlated
  components.
\newblock {\em SIAM Journal on Scientific Computing}, 33(4):1975--1995, 2011.

\bibitem{Bishop2006}
C.~M. Bishop.
\newblock {\em Pattern {R}ecognition and {M}achine {L}earning}, volume~2.
\newblock Springer, August 2006.
\newblock ISBN 978-0-387-31073-2.

\bibitem{CMPY2008}
A.~Caponnetto, C.~A. Micchelli, M.~Pontil, and Y.~Ying.
\newblock Universal multi-task kernels.
\newblock {\em Journal of Machine Learning Research}, 9:1615--1646, August
  2008.

\bibitem{Carmeli2006}
C.~Carmeli, E.~De~Vito, and A.~Toigo.
\newblock Vector valued reproducing kernel {H}ilbert spaces of integrable
  functions and {M}ercer theorem.
\newblock {\em Anal. Appl. (Singap.)}, 4(4):377--408, 2006.

\bibitem{Wendland2017b}
P.~Farrell, K.~Gillow, and H.~Wendland.
\newblock Multilevel interpolation of divergence-free vector fields.
\newblock {\em IMA Journal of Numerical Analysis}, 37(1):332--353, 2017.

\bibitem{Franke1979}
R.~Franke.
\newblock {\em A critical comparison of some methods for interpolation of
  scattered data}, volume 253.
\newblock Naval Postgraduate School Tech.Rep., Mar. 1979.

\bibitem{Fuselier2009}
E.~J. Fuselier and G.~B. Wright.
\newblock Stability and error estimates for vector field interpolation and
  decomposition on the sphere with rbfs.
\newblock {\em SIAM Journal on Numerical Analysis}, 47(5):3213--3239, 2009.

\bibitem{GK2012}
M.~G{\"u}nther and L.~Klotz.
\newblock Schur's theorem for a block {H}adamard product.
\newblock {\em Linear Algebra and its Applications}, 437:948--956, 2012.

\bibitem{HS2017a}
B.~Haasdonk and G.~Santin.
\newblock {\em Greedy Kernel Approximation for Sparse Surrogate Modeling},
  pages 21--45.
\newblock Springer International Publishing, Cham, 2018.

\bibitem{KDPCRA2016}
H.~Kadri, E.~Duflos, P.~Preux., S.~Canu, A.~Rakotomamonjy, and J.~Audiffren.
\newblock Operator-valued kernels for learning from functional response data.
\newblock {\em Journal of Machine Learning Research}, 17(20), 2016.

\bibitem{Lowitzsch2005a}
S.~Lowitzsch.
\newblock Matrix-valued radial basis functions: stability estimates and
  applications.
\newblock {\em Advances in Computational Mathematics}, 23(3):299--315, Oct
  2005.

\bibitem{Micchelli1986}
C.~A. Micchelli.
\newblock Interpolation of scattered data: Distance matrices and conditionally
  positive definite functions.
\newblock {\em Constructive Approximation}, 2:11--22, 1986.

\bibitem{MP2004}
C.~A. Micchelli and M.~Pontil.
\newblock Kernels for multi-task learning.
\newblock {\em Advances in Neural Information Processing Systems}, 2004.

\bibitem{Micchelli2005}
C.~A. Micchelli and M.~Pontil.
\newblock On learning vector-valued functions.
\newblock {\em Neural Comput.}, 17(1):177--204, 2005.

\bibitem{MXZ2006}
C.~A. Micchelli, Y.~Xu, and H.~Zhang.
\newblock Universal kernels.
\newblock {\em Journal of Machine Learning Research}, 7:2651--2667, June 2006.

\bibitem{MG2014}
M.~Michelo and J.~A. Glaunes.
\newblock Matrix-valued kernels for shape deformation analysis.
\newblock {\em Geometry, Imaging and Computing}, 1(1):57--139, 2014.

\bibitem{Mouattamid2009}
M.~Mouattamid and R.~Schaback.
\newblock Recursive kernels.
\newblock {\em Anal. Theory Appl.}, 25(4):301--316, 2009.

\bibitem{MR1254147}
F.~J. Narcowich and J.~D. Ward.
\newblock Generalized {H}ermite interpolation via matrix-valued conditionally
  positive definite functions.
\newblock {\em Math. Comp.}, 63(208):661--687, 1994.

\bibitem{Reisert2007}
M.~Reisert and H.~Burkhardt.
\newblock Learning equivariant functions with matrix valued kernels.
\newblock {\em J. Mach. Learn. Res.}, 8:385--408, May 2007.

\bibitem{Sch1993a}
R.~Schaback.
\newblock A comparison of radial basis function interpolants.
\newblock In K.~Jetter and F.~Utreras, editors, {\em Multivariate approximation
  - from CAGD to wavelets}, pages 293--305. World Scientific, 1993.

\bibitem{Schaback2006}
R.~Schaback and H.~Wendland.
\newblock Kernel techniques: From machine learning to meshless methods.
\newblock {\em Acta Numer.}, 15:543--639, May 2006.

\bibitem{Scholkopf2002a}
B.~Sch{\"o}lkopf and A.~J. Smola.
\newblock {\em Learning with Kernels: Support Vector Machines, Regularization,
  Optimization and Beyond}.
\newblock MIT Press, 2002.

\bibitem{Schroedl09}
S.~Schr{\"o}dl.
\newblock {\em Operator Valued Reproducing Kernels and Their Application in
  Approximation and Statistical Learning}.
\newblock Berichte aus der Mathematik. Shaker, 2009.

\bibitem{Steinwart2001}
I.~Steinwart.
\newblock On the influence of the kernel on the consistency of support vector
  machines.
\newblock {\em Journal of Machine Learning Research}, 2:67--93, 2001.

\bibitem{Steinwart2011}
I.~Steinwart, D.~Hush, and C.~Scovel.
\newblock Training {SVM}s {W}ithout {O}ffset.
\newblock {\em J. Mach. Learn. Res.}, 12:141--202, February 2011.

\bibitem{Wendland2005}
H.~Wendland.
\newblock {\em Scattered {D}ata {A}pproximation}, volume~17 of {\em Cambridge
  Monographs on Applied and Computational Mathematics}.
\newblock Cambridge University Press, Cambridge, 2005.

\bibitem{Wirtz2013}
D.~Wirtz and B.~Haasdonk.
\newblock A vectorial kernel orthogonal greedy algorithm.
\newblock {\em Dolomites Res. Notes Approx.}, 6:83--100, 2013.

\bibitem{Wirtz2015a}
D.~Wirtz, N.~Karajan, and B.~Haasdonk.
\newblock Surrogate modelling of multiscale models using kernel methods.
\newblock {\em International Journal of Numerical Methods in Engineering},
  101(1):1--28, 2015.

\bibitem{WH18a}
D.~Wittwar and B.~Haasdonk.
\newblock {Greedy Algorithms for Matrix-Valued Kernels}.
\newblock In {\em Numerical Mathematics and Advanced Applications - ENUMATH
  2017}, 2018, to appear.

\end{thebibliography}

\end{document}